\DeclareMathOperator{\Rep}{Rep}
\DeclareMathOperator{\Diag}{Diag}
\DeclareMathOperator{\PSL}{PSL}
\DeclareMathOperator{\PSiL}{P\Sigma L}
\DeclareMathOperator{\PGL}{PGL}
\DeclareMathOperator{\PSU}{PSU}
\DeclareMathOperator{\Sp}{Sp}
\DeclareMathOperator{\Ree}{Ree}
\DeclareMathOperator{\Co}{Co}
\DeclareMathOperator{\HS}{HS}
\DeclareMathOperator{\AGaL}{A\Gamma L}
\DeclareMathOperator{\AGL}{AGL}
\DeclareMathOperator{\GL}{GL}
\DeclareMathOperator{\PermAut}{PermAut}
\DeclareMathOperator{\Aut}{Aut}
\DeclareMathOperator{\diff}{diff}
\DeclareMathOperator{\alt}{A}
\DeclareMathOperator{\s}{S}
\DeclareMathOperator{\Sym}{Sym}
\DeclareMathOperator{\soc}{soc}
\DeclareMathOperator{\mg}{M}
\renewcommand{\b}{\underline}
\renewcommand{\b}{\mathbf}
\renewcommand{\leq}{\leqslant}
\renewcommand{\geq}{\geqslant}
\newcommand{\C}{\mathcal C}
\newcommand{\D}{\mathcal{D}}
\newcommand{\F}{\mathbb{F}}
\newcommand{\G}{\mathcal{G}}
\renewcommand{\H}{\mathcal H}
\renewcommand{\P}{\mathcal P}
\renewcommand{\S}{\mathcal{S}}
\newcommand{\Z}{\mathbb{Z}}
\newcommand{\RM}{\mathcal{RM}}
\newcommand{\PRM}{\mathcal{PRM}}
\newcommand{\nsub}{\trianglelefteq}
\newcommand{\Xpi}{X_{{\b 0},i}^{Q_i^\times}}
\DeclareMathOperator{\wt}{wt}
\DeclareMathOperator{\supp}{supp}
\theoremstyle{plain}
\newtheorem{lemma}{Lemma}
\newtheorem{theorem}[lemma]{Theorem}
\newtheorem{proposition}[lemma]{Proposition}
\theoremstyle{definition}
\newtheorem{definition}[lemma]{Definition}
\newtheorem{remark}[lemma]{Remark}
\numberwithin{equation}{section}
\numberwithin{lemma}{section}
\begin{document}

\title{
 Minimal Binary $2$-Neighbour-Transitive Codes\thanks{The first author sincerely thanks Michael Giudici for reading first drafts of this work and is grateful for the support of an Australian Research Training Program Scholarship and a University of Western Australia Safety-Net Top-Up Scholarship. The research forms part of Australian Research Council Project FF0776186.}
}

\author{
 Daniel R. Hawtin$^1$
 \and
 Cheryl E. Praeger$^2$
}

\date{
 \small{
  \emph{
   $^1$School of Science \& Environment (Mathematics),\\
   Memorial University of Newfoundland, Grenfell Campus,\\ 
   Corner Brook, NL, A2H 5G5, Canada.\\
   \href{mailto:dan.hawtin@gmail.com}{dan.hawtin@gmail.com}\\
   \vspace{0.25cm}
  }
 }
 \small{
  \emph{
   $^2$Centre for the Mathematics of Symmetry and Computation,\\
   The University of Western Australia,\\ 
   Crawley, WA, 6009, Australia.\\
   \href{mailto:cheryl.praeger@uwa.edu.au}{cheryl.praeger@uwa.edu.au}\\
   \vspace{0.25cm}
  }
 }
 \today
}

\maketitle
\begin{abstract}
 The main result here is a characterisation of binary \emph{$2$-neighbour-transitive} codes with minimum distance at least $5$ via their minimal subcodes, which are found to be generated by certain designs. The motivation for studying this class of codes comes primarily from their relationship to the class of \emph{completely regular} codes. The results contained here yield many more examples of $2$-neighbour-transitive codes than previous classification results of families of $2$-neighbour-transitive codes. In the process, new lower bounds on the minimum distance of particular sub-families are produced. Several results on the structure of $2$-neighbour-transitive codes with arbitrary alphabet size are also proved. The proofs of the main results apply the classification of minimal and pre-minimal submodules of the permutation modules over $\F_2$ for finite $2$-transitive permutation groups.
\end{abstract}

\section{Introduction}

A subset $C$ of the vertex set $V\varGamma$ of the Hamming graph $\varGamma=H(m,q)$ is a called \emph{code}, the elements of $C$ are called \emph{codewords}, and the subset $C_i$ of $V\varGamma$ consisting of all vertices of $H(m,q)$ having nearest codeword at Hamming distance $i$ is called the \emph{set of $i$-neighbours} of $C$. The classes of \emph{completely regular} and \emph{$s$-regular} codes were introduced by Delsarte \cite{delsarte1973algebraic} as a generalisation of \emph{perfect} codes. The definition of a completely regular code $C$ involves certain combinatorial regularity conditions on the \emph{distance partition} $\{C,C_1,\ldots, C_\rho\}$ of $C$, where $\rho$ is the \emph{covering radius}. The current paper concerns the algebraic analogues, defined directly below, of the classes of completely regular and $s$-regular codes. Note that the group $\Aut(C)$ is the setwise stabiliser of $C$ in the full automorphism group of $H(m,q)$.

\begin{definition}\label{sneighbourtransdef}
 Let $C$ be a code in $H(m,q)$ with covering radius $\rho$, let $s\in\{1,\ldots,\rho\}$, and $X\leq\Aut(C)$. Then $C$ is said to be
 \begin{enumerate}
  \item \emph{$(X,s)$-neighbour-transitive} if $X$ acts transitively on each of the sets $C,C_1,\ldots, C_s$,
  \item \emph{$X$-neighbour-transitive} if $C$ is $(X,1)$-neighbour-transitive, 
  \item \emph{$X$-completely transitive} if $C$ is $(X,\rho)$-neighbour-transitive, and,
  \item \emph{$s$-neighbour-transitive}, \emph{neighbour-transitive}, or \emph{completely transitive}, respectively, if $C$ is $(\Aut(C),s)$-neighbour-transitive, $\Aut(C)$-\emph{neighbour-transitive}, or $\Aut(C)$-\emph{completely transitive}, respectively.
 \end{enumerate}
\end{definition}

A variant of the above concept of complete transitivity was introduced for linear codes by Sol{\'e} \cite{sole1987completely}, with the above definition first appearing in \cite{Giudici1999647}. Note that non-linear completely transitive codes do indeed exist; see \cite{gillespie2012nord}. Completely transitive codes form a subfamily of completely regular codes, and $s$-neighbour transitive codes are a sub-family of $s$-regular codes, for each $s$. It is hoped that studying $2$-neighbour-transitive codes will lead to a better understanding of completely transitive and completely regular codes. Indeed a classification of $2$-neighbour-transitive codes would have as a corollary a classification of completely transitive codes. 

The main result of the present paper, stated below, provides a characterisation of binary $2$-neighbour-transitive codes with minimum distance at least $5$. 

\begin{theorem}\label{binaryx2ntchar}
 Let ${\mathcal C}$ be a binary code in $H(m,2)$ with minimum distance at least $5$. Then $\C$ is $2$-neighbour-transitive if and only if one of the following holds:
 \begin{enumerate}
  \item ${\mathcal C}$ is the binary repetition code with minimum distance $m$;
  \item ${\mathcal C}$ is one of the following codes (see \cite[Definition~4.1]{ef2nt}): 
  \begin{enumerate}
   \item the Hadamard code with $m=12$ and minimum distance $6$,
   \item the punctured Hadamard code with $m=11$ and minimum distance $5$,
   \item the even weight subcode of the punctured Hadamard code with $m=11$ and minimum distance $6$; or,
  \end{enumerate}
  \item there exists a linear subcode $C \subseteq {\mathcal C}$ with minimum distance $\delta$ and a subgroup $X_{\b 0}\leq \Aut(C)_{\b 0}$, where $C$, $X_{\b 0}$, $m$ and $\delta$ are as in Table~\ref{binarytable}, such that
  \begin{enumerate}
   \item $C$ is $(X,2)$-neighbour-transitive, where $X=T_C\rtimes X_{\b 0}\leq \Aut(\C)$, and,
   \item $\C$ is the union of a set $\S$ of cosets of $C$, and $\Aut(\C)$ acts transitively on $\S$.
  \end{enumerate}
 \end{enumerate}
\end{theorem}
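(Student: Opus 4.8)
The plan is to prove Theorem~\ref{binaryx2ntchar} by reducing the study of an arbitrary binary $2$-neighbour-transitive code $\C$ with minimum distance at least $5$ to the study of a well-chosen \emph{linear} subcode, and then to invoke the classification of minimal and pre-minimal submodules of the $\F_2$-permutation modules of finite $2$-transitive groups mentioned in the abstract. First I would recall the standard reduction: if $\C$ is $(X,2)$-neighbour-transitive then $X$ acts transitively on $\C$, and a diagonal/translation argument shows we may assume ${\b 0}\in\C$ and that $X=T_C\rtimes X_{\b 0}$ where $T_C$ is the translation subgroup by codewords and $X_{\b 0}$ is the stabiliser of ${\b 0}$; moreover the action of $X_{\b 0}$ on the $m$ coordinates is $2$-transitive because transitivity on $C_1$ forces $2$-transitivity on the entries (this is the key structural fact, presumably established earlier in the paper). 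The linear span $C=\langle \C\rangle$ (or the subcode generated by the minimal codewords of $\C$) is then an $\F_2 X_{\b 0}$-submodule of the permutation module $\F_2^m$, it still has minimum distance at least that of $\C$ hence at least $5$ when $\C$ itself is linear, and $\C$ is a union of cosets of $C$ with $\Aut(\C)$ transitive on them --- giving conclusion (3)(b).

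The core of the argument is then to identify which submodules $C$ can actually occur. Here I would pass to a \emph{minimal} submodule $M$ of $C$ (a minimal subcode in the terminology of the abstract): minimal codewords span $M$, and the hypothesis that the ambient code has minimum distance at least $5$ imposes strong constraints on the weights of codewords of $M$, which are governed by an associated design on the $m$ points permuted $2$-transitively by $X_{\b 0}$. Using the classification of $2$-transitive groups (via CFSG) together with the explicit description of minimal and pre-minimal $\F_2$-submodules of their permutation modules, I would run through the possibilities case by case: the affine (elementary-abelian socle) case, which should yield Reed--Muller-type codes and ultimately the entries of Table~\ref{binarytable}; the almost-simple cases with the various $2$-transitive socles ($\alt_n$, $\PSL$, symplectic, unitary, Suzuki, Ree, and the sporadic examples $\mg_{11},\mg_{12},\mg_{22},\mg_{23},\mg_{24}$, $\HS$, $\Co_3$, etc.), discarding those whose minimal submodules contain codewords of weight $3$ or $4$ (violating the distance-$\ge 5$ hypothesis on the ambient code), and recording the survivors. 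The repetition code is the trivial submodule case (conclusion (1)), and the Hadamard/punctured Hadamard examples on $m=11,12$ are the small exceptional cases where the relevant module is not covered by the generic Table~\ref{binarytable} entries but must be listed separately (conclusion (2)); these tie back to \cite[Definition~4.1]{ef2nt}.

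For the converse direction I would verify that each code listed in (1), (2) and each $(C,X_{\b 0})$ in Table~\ref{binarytable} does give rise to a $2$-neighbour-transitive code: transitivity on $\C$ follows from transitivity on the coset set $\S$ combined with transitivity of $X$ on $C$, transitivity on $C_1$ (the weight-$1$ neighbours) follows from $2$-transitivity of $X_{\b 0}$ on coordinates, and transitivity on $C_2$ requires a short separate check --- this is where the hypothesis that $C$ itself is $(X,2)$-neighbour-transitive in (3)(a) is used, together with the fact that passing to a union of cosets on which $\Aut(\C)$ is transitive cannot destroy neighbour-transitivity at levels $1$ and $2$ as long as the minimum distance stays $\ge 5$ (so that $2$-neighbours are uniquely decodable and the relevant combinatorial configurations are preserved). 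The main obstacle, and the bulk of the work, is the case analysis over $2$-transitive groups: one must for each family compute (or cite) the submodule lattice of the $\F_2$-permutation module, determine the minimum weight of each minimal and pre-minimal submodule, and reconcile this with the distance-$\ge 5$ constraint; handling the affine case carefully --- where Reed--Muller codes $\RM(1,d)$, $\RM(d-2,d)$ and their relatives appear and the minimum-distance bound translates into a condition on $d$ --- and correctly isolating the sporadic exceptions on $m=11,12$ is where most of the delicate bookkeeping lies.
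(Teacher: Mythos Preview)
Your overall strategy --- reduce to a linear subcode that is an $\F_2 X_{\b 0}$-submodule, then invoke the classification of minimal/preminimal submodules --- is in spirit the paper's strategy, but your structural reduction has two genuine gaps that the paper handles differently.

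First, you assert that ``a diagonal/translation argument shows we may assume $\b 0\in\C$ and that $X=T_C\rtimes X_{\b 0}$''. Assuming $\b 0\in\C$ is fine, but the decomposition $X=T_C\rtimes X_{\b 0}$ is \emph{not} automatic: there is no a priori reason that $\Aut(\C)$ contains any nontrivial translations. The paper deals with this by looking at the kernel $K=X\cap B$ of the action on entries and splitting into the \emph{entry-faithful} case ($K=1$) versus the \emph{alphabet-affine} case ($K\neq 1$; since $q=2$ the almost-simple alternative does not occur). The entry-faithful case is handled by citing the classification in \cite[Theorem~1.1]{ef2nt}, and this is precisely where the repetition code and the three Hadamard-type codes in parts (1) and (2) come from (together with the case where the submodule $W$ below equals $\Rep(m,2)$, handled by Theorem~\ref{onedimensionaltheorem}). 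They are not, as you suggest, ``small exceptional modules'' missed by Table~\ref{binarytable}; they are the output of an entirely separate branch of the argument. Your outline never isolates the $K=1$ case, so as written it cannot reach parts (1) and (2) correctly.

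Second, in the $K\neq 1$ branch you propose taking $C=\langle\C\rangle$, the linear span. This goes the wrong way: the span \emph{contains} $\C$, so its minimum distance is at most that of $\C$, and there is no reason it stays $\geq 5$ (your parenthetical ``hence at least $5$ when $\C$ itself is linear'' concedes this). The paper instead produces a linear \emph{subcode}: it shows (Proposition~\ref{orbitof0is2ntmodule}) that the orbit $W=\b 0^{O_p(K)}$ is an $\F_2 X_{\b 0}$-submodule contained in $\C$, with minimum distance $\geq\delta\geq 5$, and that $\C$ is a union of cosets of $W$ permuted transitively by $\Aut(\C)$. One then passes to a minimal or preminimal submodule $C\subseteq W$ and applies the Ivanov--Praeger/Mortimer classification (Theorem~\ref{binarygroups}). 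A minor related point: Proposition~\ref{ihom} gives that $X_{\b 0}$ is $2$-\emph{homogeneous} on $M$, not $2$-transitive; the $2$-homogeneous-not-$2$-transitive subgroups of $\AGaL_1(r)$ must be (and are) treated separately, yielding line~1 of Table~\ref{binarytable}.
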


\begin{remark}\label{psuremark}
 Note that, in line 9 of Table~\ref{binarytable} it is an open problem as to whether certain values of $r$ correspond to a code having minimum distance $4$. For $\soc(X_{\b 0})=\PSU_3(r)$, if $r\equiv 3\pmod{4}$ then $C$ is self-orthogonal (that is, $C\subseteq C^\perp$), whilst this is not the case when $r\equiv 1\pmod{4}$. Thus, when $\soc(X_{\b 0})=\PSU_3(r)$, there are two examples of minimal $(X,2)$-neighbour-transitive codes if $r\equiv 1\pmod{4}$, one for each of lines 8 and 9, when $\soc(X_{\b 0})=\PSU_3(r)$, but only one, in line 8, when $r\equiv 3\pmod{4}$ (see the proof of Theorem~\ref{binarygroups}). 
\end{remark}

\begin{table}
 \begin{center}
 \begin{tabular}{rccccc}
  line & $\soc(X_{\b 0})$ & $m$ & $\delta$ & $k$ & conditions\\
  \hline
  1 & $\Z_p^d$ & $r=p^d$ & $\geq \sqrt{r-1}+1$ & $\frac{r-1}{2}$ & $23\leq r\equiv 7 \pmod{8}$ \\
   & &  &  &  & $2$-hom. not $2$-trans.\\
  
  2 & $\Z_2^t$ & $2^t$ & $2^{t-1}$ & $t+1$ & $t\geq 4$, $2$-trans. \\
  
  3 & $\PSL_t(2^k)$ & $\frac{2^{kt}-1}{2^k-1}$ & $\geq\frac{2^{k(t-1)}-1}{2^k-1}+1$ & $t^k$ & $t\geq 3$, $(k,t)\neq (1,3)$ \\
  
  4 & $\alt_7$ & $15$ & $8$ & $4$ & - \\
  
  5 & $\PSL_2(r)$ & $r+1$ & $\geq \sqrt{r}+1$ & $\frac{r+1}{2}$ & $23\leq r\equiv\pm 1\pmod 8$ \\
   & &  &  &  & not $3$-trans.\\
  
  6 & $\Sp_{2t}(2)$ & $2^{2t-1}-2^{t-1}$ & $2^{2t-2}-2^{t-1}$ & $2t+1$ & $t\geq 3$ \\
  7 & $\Sp_{2t}(2)$ & $2^{2t-1}+2^{t-1}$ & $2^{2t-2}$ & $2t+1$ & $t\geq 3$ \\
  
  8 & $\PSU_3(r)$ & $r^3+1$ & $\geq r^2+1$ & $r^2-r+1$ & $r$ is odd \\
  9 & $\PSU_3(r)$ & $r^3+1$ & $\geq 4$ & $r^3-r^2+r$ & $r\equiv 1\pmod{4}$ \\
  
  10 & $\Ree(r)$ & $r^3+1$ & $\geq r^{3/2}+1$ & $r^2-r+1$ & $r\geq 3$ \\
  
  11 & $\mg_{22}$ & $22$ & $8$ & $10$ & - \\
  12 & $\mg_{23}$ & $23$ & $8$ & $11$ & - \\
  13 & $\mg_{24}$ & $24$ & $8$ & $12$ & - \\
  
  14 & $\HS$ & $176$ & $\geq 50$ & $21$ & - \\
  
  15 & $\Co_3$ & $276$ & $100$ & $23$ & - \\
  \hline
 \end{tabular}
 \caption{Parameters for the ``minimal'' binary linear-$(X,2)$-neighbour-transitive codes $C$ in $H(m,2)$ of  Part 3 in Theorem~\ref{binaryx2ntchar}, where $k$ is the dimension of $C$, $X=T_C\rtimes X_{\b 0}$, and the minimum distance $\delta$ of $C$ satisfies $5\leq\delta< m$. (See Theorem~\ref{binarygroups}, and also Remark~\ref{psuremark}. Note that $C$ is given in the relevant part of the proof of Theorem~\ref{binarygroups}.)}
 \label{binarytable}
 \end{center}
\end{table}

The parameters of perfect codes over prime power alphabets have been classified, and codes satisfying these parameters found; see \cite{tietavainen1973nonexistence} or \cite{Zinoviev73thenonexistence}. In contrast, the class of completely regular codes is vast, with similar classification results remaining an active area of research. Several recent results have been obtained by Borges et al.~in \cite{borges2000nonexistence,borgesrho1,borges2012new,Borges201468}. For a survey of results on completely regular codes see \cite{borges2017survey}.

Completely-transitive codes have been studied in \cite{Borges201468,Gill2017}, for instance. Neighbour-transitive codes are investigated in  \cite{ntrcodes,gillespieCharNT,gillespiediadntc}. The class of $2$-neighbour-transitive codes is the subject of \cite{ef2nt,aas2nt,ondimblock}, and the present work comprises part of the first author's PhD thesis \cite{myphdthesis}. Codes with $2$-transitive actions on the entries of the Hamming graph have been used to find certain families of codes that achieve capacity on erasure channels \cite{Kudekar:2016:RCA:2897518.2897584}, and many $2$-neighbour-transitive codes indeed admit such actions (see Proposition~\ref{ihom}).

The study of $2$-neighbour-transitive codes has been partitioned into three subclasses, as per the following definition. For definitions and notation see Section~\ref{prelimsect}.

\begin{definition}\label{efaasaadef}
 Let $C$ be a code in $H(m,q)$, $X\leq\Aut(C)$ and $K$ be the kernel of the action of $X$ on the set of entries $M$. Then $C$ is 
 \begin{enumerate}
  \item \emph{$X$-entry-faithful} if $X$ acts faithfully on $M$, that is, $K=1$,
  \item \emph{$X$-alphabet-almost-simple} if $K\neq 1$, $X$ acts transitively on $M$, and $X_i^{Q_i}$ is a $2$-transitive almost-simple group, and,
  \item \emph{$X$-alphabet-affine} if $K\neq 1$, $X$ acts transitively on $M$, and $X_i^{Q_i}$ is a $2$-transitive affine group.
 \end{enumerate}
\end{definition}

Note that Proposition~\ref{ihom} and the fact that every $2$-transitive group is either affine or almost-simple (see \cite[Section 154]{burnside1911theory}) ensure that every $2$-neighbour-transitive code satisfies precisely one of the cases in Definition~\ref{efaasaadef}. 

Those $2$-neighbour transitive codes that are also $X$-entry-faithful, for some automorphism group $X$, and have minimum distance at least $5$ are classified in \cite{ef2nt}; while those that are $X$-alphabet-almost-simple, for some automorphism group $X$, and have minimum distance at least $3$ are classified in \cite{aas2nt}. As a consequence of these results, the proof of Theorem~\ref{binaryx2ntchar} amounts to finding all minimal subcodes of binary $2$-neighbour-transitive and $X$-alphabet-affine codes, for some automorphism group $X$, having minimum distance at least $5$. Note that such codes having a binary repetition subcode are classified in \cite{ondimblock}. 

Table~\ref{binarytable} gives certain parameters of the minimal binary $2$-neighbour-transitive and $X$-alphabet-affine codes with minimum distance $\delta$ satisfying $5\leq\delta\leq m$. While $\delta$ is not explicitly found in every case, bounds are given for those cases where $\delta$ is not known. In particular, in line 10 of Table~\ref{binarytable}, where $\soc(X_{0})=\Ree(r)$, the bound of $\delta\geq r^{3/2}+1$, on the minimum distance $\delta$, is new. This bound follows from Lemma~\ref{mindesigncodedist}, which shows that if the minimum weight codewords of $C$ form a $2$-$(m,\delta,\lambda)$ design and $C$ is self-orthogonal ($C\subseteq C^\perp$), then $\delta\geq \sqrt{m-1}+1$.

Section~\ref{prelimsect} introduces the notation used in the remaining sections. In Section~\ref{extensionsection} it is shown that if $C$ is $X$-alphabet-affine, so that $q=p^d$ for some prime $p$, then $C$ contains an $\F_p X_{\b 0}$-module as a block of imprimitivity for the action of $X$ on $C$. Restricting to the case $q=2$, a strategy of characterisation via minimal submodules is then used in Section~\ref{binarycasesec} for the proof of Theorem~\ref{binaryx2ntchar}. The proof makes use of results on $2$-transitive permutation modules; see, in particular, \cite{Ivanov1993,mortimer1980modular}.


\section{Notation and preliminaries}\label{prelimsect}

Let the \emph{set of entries} $M$ and the \emph{alphabet} $Q$ be sets of sizes $m$ and $q$, respectively, both integers at least $2$. The vertex set $V\varGamma$ of a Hamming graph $\varGamma=H(m,q)$ consists of all functions from the set $M$ to the set $Q$, usually expressed as $m$-tuples. Let $Q_i\cong Q$ be the copy of the alphabet in the entry $i\in M$ so that the vertex set of $H(m,q)$ can be identified with the product 
\[
 V\varGamma=\prod_{i\in M}Q_i.
\] 
An edge exists between two vertices if and only if they differ as $m$-tuples in exactly one entry. Note that $S^\times$ will denote the set $S\setminus \{0\}$ for any set $S$ containing $0$. In particular, $Q$ will usually be a vector-space here, and hence contains the zero vector. A code $C$ is a subset of $V\varGamma$. If $\alpha$ is a vertex of $H(m,q)$ and $i\in M$ then $\alpha_i$ refers to the value of $\alpha$ in the $i$-th entry, that is, $\alpha_i\in Q_i$, so that $\alpha=(\alpha_1,\ldots,\alpha_m)$ when $M=\{1,\ldots,m\}$.  For more in depth background material on coding theory see \cite{cameron1991designs} or \cite{macwilliams1978theory}.

Let $\alpha,\beta$ be vertices and $C$ be a code in a Hamming graph $H(m,q)$ with $0\in Q$ a distinguished element of the alphabet. A summary of important notation regarding codes in Hamming graphs is contained in Table~\ref{hammingnotation}.
\begin{table}
 \begin{center}
 \begin{tabular}{cp{7 cm}}
  Notation & Explanation\\
  \hline
  $\b 0$ & vertex with $0$ in each entry\\
  
  $(a^k,0^{m-k})$ & vertex with $a\in Q$ in the first $k$ entries and $0$ otherwise\\
  
  $\diff(\alpha,\beta)=\{i\in M\mid \alpha_i\neq\beta_i\}$ & set of entries in which $\alpha$ and $\beta$ differ\\
  
  $\supp(\alpha)=\{i\in M\mid \alpha_i\neq 0\}$ & support of $\alpha$\\
  
  $\wt(\alpha)=|\supp(\alpha)|$ & weight of $\alpha$\\
  
  $d(\alpha,\beta)=|\diff(\alpha,\beta)|$ & Hamming distance\\
  
  $\varGamma_s(\alpha)=\{\beta\in V\varGamma \mid d(\alpha,\beta)=s\}$ & set of $s$-neighbours of $\alpha$\\
  
  $\delta=\min\{d(\alpha,\beta)\mid \alpha,\beta\in C,\alpha\neq\beta\}$ & minimum distance of $C$\\
  
  $d(\alpha,C)=\min\{d(\alpha,\beta) \mid \beta\in C\}$ & distance from $\alpha$ to $C$\\
  
  $\rho =\max\{d(\alpha,C)\mid\alpha\in V\varGamma\}$ & covering radius of $C$\\
  
  $C_s=\{\alpha\in V\varGamma \mid d(\alpha,C)=s\}$ & set of $s$-neighbours of $C$\\
  
  $\{C=C_0,C_1,\ldots, C_\rho\}$ & distance partition of $C$\\
    
  \hline
 \end{tabular}
 \caption{Hamming graph notation.}
 \label{hammingnotation}
 \end{center}
\end{table}

Note that if the minimum distance of a code $C$ satisfies $\delta\geq 2s$, then the set of $s$-neighbours $C_s$ satisfies $C_s=\cup_{\alpha\in C}\varGamma_s(\alpha)$ and if $\delta\geq 2s+1$ this is a disjoint union. This fact is crucial in many of the proofs below; it is often assumed that $\delta\geq 5$, in which case every element of $C_2$ is distance $2$ from a unique codeword.

A \emph{linear} code is a code $C$ in $H(m,q)$ with alphabet $Q=\F_q$ a finite field, so that the vertices of $H(m,q)$ from a vector space $V$, such that $C$ is an $\F_q$-subspace of $V$. Given $\alpha,\beta\in V$, the usual inner product is given by $\langle\alpha,\beta\rangle=\sum_{i\in M}\alpha_i\beta_i$. The dual of a linear code is defined below.

\begin{definition}\label{dualcodedef}
 Let $C$ be a linear code in $H(m,q)$ with vertex set $V$. Then the \emph{dual} code of $C$ is $C^\perp=\{\beta\in V\mid \forall \alpha\in C,\langle\alpha,\beta\rangle=0\}$.
\end{definition}


\subsection{Automorphisms of a Hamming graph}\label{hamminggraphautoprelim}

The automorphism group $\Aut(\varGamma)$ of the Hamming graph is the semi-direct product $B\rtimes L$, where $B\cong \Sym(Q)^m$ and $L\cong \Sym(M)$ (see \cite[Theorem 9.2.1]{brouwer}). Note that $B$ and $L$ are called the \emph{base group} and the \emph{top group}, respectively, of $\Aut(\varGamma)$. Since we identify $Q_i$ with $Q$, we also identify $\Sym(Q_i)$ with $\Sym(Q)$. If $h\in B$ and $i\in M$ then $h_i\in \Sym(Q_i)$ is the image of the action of $h$ in the entry $i\in M$. Let $h\in B$, $\sigma\in L$ and $\alpha\in V\varGamma$. Then $h$ and $\sigma$ act on $\alpha$ explicitly via: 
\begin{equation*}
\alpha^h =(\alpha_1^{h_1},\ldots,\alpha_m^{h_m})\quad\text{and}\quad
\alpha^\sigma=(\alpha_{1{\sigma^{-1}}},\ldots,\alpha_{m{\sigma^{-1}}}).
\end{equation*}
The automorphism group of a code $C$ in $\varGamma=H(m,q)$ is $\Aut(C)=\Aut(\varGamma)_C$, the setwise stabiliser of $C$ in $\Aut(\varGamma)$. 

A group with an element or set appearing as a subscript denotes a setwise stabiliser subgroup, and if the subscript is a set in parantheses it is a point-wise stabiliser subgroup. A group with a set appearing as a superscript denotes the subgroup induced in the symmetric group on the set by the group. (For more background and notation on permutation groups see, for instance, \cite{dixon1996permutation}.) In particular, let $X$ be a subgroup of $\Aut(\varGamma)$. Then the \emph{action of $X$ on entries} is the subgroup $X^M$ of $\Sym(M)$ induced by the action of $X$ on $M$. Note that the pre-image of an element of $X^M$ does not necessarily fix any vertex of $H(m,q)$. The kernel of the action of $X$ on entries is denoted $K$ and is precisely the subgroup of $X$ fixing $M$ point-wise, that is, $K=X_{(M)}=X\cap B$. The subgroup of $\Sym(Q_i)$ induced on the alphabet $Q_i$ by the action of the stabiliser $X_i\leq X$ of the entry $i\in M$ is denoted $X_i^{Q_i}$. When $X^M$ is transitive on $M$, the group $X_i^{Q_i}$ is sometimes referred to as the \emph{action on the alphabet}. 

Given a group $H\leq \Sym(Q)$ an important subgroup of $\Aut(\varGamma)$ is the \emph{diagonal} group of $H$, denoted $\Diag_m(H)$, where an element of $H$ acts the same in each entry. Formally, $\Diag_m(H)=\{g\in B\mid \exists h\in H \text{ such that, }\forall i\in M, g_i=h \}$.

It is worth mentioning that coding theorists often consider more restricted groups of automorphisms, such as the group $\PermAut(C)=\{\sigma\mid h\sigma\in\Aut(C), h=1\in B, \sigma\in L\}$. The elements of this group are called \emph{pure permutations} on the entries of the code. 

Two codes $C$ and $C'$ in $H(m,q)$ are said to be \textit{equivalent} if there exists some $x\in \Aut(\varGamma)$ such that $C^x=\{\alpha^x\mid\alpha\in C\}=C'$. Equivalence preserves many of the important properties in coding theory, such as minimum distance and covering radius, since $\Aut(\varGamma)$ preserves distances in $H(m,q)$.

A \emph{linear representation} or \emph{projective representation} of a group $G$ is a homomorphism $\phi$ from $G$ into $\GL(V)$ or $\PGL(V)$, respectively, for some vector space $V$. If $F$ is the field underlying $V$, then the vector space $V$ is called an \emph{$FG$-module} and any subspace $U\leq V$ such that $U^G=U$, where the action of $G$ is induced by $\phi$, is called an \emph{$FG$-submodule} of $V$, and is itself an $FG$-module. Either of $F$ or $G$ may be omitted if the context is clear. An $FG$-module $V$ is called \emph{irreducible} if it contains no non-trivial submodule, that is, if $U\leq V$ such that $U^G=U$ then $U=V$ or $U=0$. Note that these definitions are usually given in a slightly more general form, but suffice for the purposes of later chapters. For further background see \cite{james2001representations}, for instance.

Suppose that $C$ is an $\F_q$-linear code, so that $V\varGamma\cong\F_q^m$, and $C$ is $(X,s)$-neighbour-transitive with $X=T_C\rtimes X_{\b 0}$. Then $X_{\b 0}$ is naturally embedded in $\GL(V\varGamma)$ and thus $V\varGamma$ is an $\F_q X_{\b 0}$-module and $C$ is a $\F_q X_{\b 0}$-submodule of $V\varGamma$.

\subsection{\texorpdfstring{$s$}{s}-Neighbour-transitive codes}\label{sntr}

This section presents preliminary results regarding $(X,s)$-neighbour-transitive codes, defined in Definition~\ref{sneighbourtransdef}. The next results give certain $2$-homogeneous and $2$-transitive actions associated with an $(X,2)$-neighbour-transitive code.

\begin{proposition}\cite[Proposition~2.5]{ef2nt}\label{ihom} 
 Let $C$ be an $(X,s)$-neighbour-transitive code in $H(m,q)$ with minimum distance $\delta$. Then for $\alpha\in C$ and $i\leq\min\{s,\lfloor\frac{\delta-1}{2}\rfloor\}$, the stabiliser $X_\alpha$ fixes setwise and acts transitively on $\varGamma_i(\alpha)$.  In particular, $X_\alpha$ acts $i$-homogeneously on $M$.    
\end{proposition}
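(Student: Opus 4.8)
The plan is to argue entirely at the level of the group action, exploiting the fact that $X$ acts transitively on $C_i$ for $i\le s$ together with the disjointness structure of the distance partition when $\delta$ is large enough. First I would fix $\alpha\in C$ and $i\le\min\{s,\lfloor(\delta-1)/2\rfloor\}$. The first key observation is that $\varGamma_i(\alpha)\subseteq C_i$: any vertex $\beta$ at Hamming distance $i$ from $\alpha$ satisfies $d(\beta,C)\le i$, and since $\delta\ge 2i+1$ (which is exactly the condition $i\le\lfloor(\delta-1)/2\rfloor$, giving $2i\le\delta-1$), the triangle inequality forces $\alpha$ to be the unique codeword within distance $i$ of $\beta$, so $d(\beta,C)=i$ exactly. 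Conversely, as noted in the remarks following Table~\ref{hammingnotation}, under $\delta\ge 2i+1$ we have the disjoint union $C_i=\bigsqcup_{\gamma\in C}\varGamma_i(\gamma)$, so the sets $\{\varGamma_i(\gamma)\}_{\gamma\in C}$ partition $C_i$, and $\varGamma_i(\alpha)$ is exactly the block of this partition ``attached'' to $\alpha$.

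Next I would show $X_\alpha$ stabilises $\varGamma_i(\alpha)$ setwise. Since automorphisms in $\Aut(\varGamma)$ preserve Hamming distance and $X\le\Aut(C)$ preserves $C$ setwise, any $x\in X$ maps $\varGamma_i(\gamma)$ to $\varGamma_i(\gamma^x)$ for every $\gamma\in C$; in particular $x$ permutes the blocks of the partition of $C_i$ above, acting on them as it acts on $C$. Hence if $x\in X_\alpha$ then $\varGamma_i(\alpha)^x=\varGamma_i(\alpha)$. For transitivity: $X$ is transitive on $C_i$ by hypothesis (as $i\le s$), and the blocks $\varGamma_i(\gamma)$ form a system of imprimitivity permuted the way $X$ permutes $C$; by a standard orbit--stabiliser argument for block systems, the setwise stabiliser of a block acts transitively on that block precisely when the whole group is transitive on the union of blocks in that orbit. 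Concretely, given $\beta,\beta'\in\varGamma_i(\alpha)$, transitivity on $C_i$ gives $x\in X$ with $\beta^x=\beta'$; then $\alpha^x$ is the unique codeword at distance $i$ from $\beta'$, which is $\alpha$, so $x\in X_\alpha$. This proves $X_\alpha$ is transitive on $\varGamma_i(\alpha)$.

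Finally, for the $i$-homogeneity statement on $M$: an element of $\varGamma_i(\alpha)$ differs from $\alpha$ in exactly $i$ entries, so the map $\beta\mapsto\diff(\alpha,\beta)$ sends $\varGamma_i(\alpha)$ onto the set of $i$-subsets of $M$, and this map is $X_\alpha$-equivariant because $X_\alpha$ fixes $\alpha$ and acts on $M$ via $X^M_\alpha$ while permuting entrywise values. Since $X_\alpha$ is transitive on $\varGamma_i(\alpha)$, the induced action on $i$-subsets of $M$ is transitive, which is the definition of $i$-homogeneity. I do not expect a serious obstacle here; the only point requiring care is the bookkeeping that $\delta\ge 2i+1$ genuinely yields \emph{uniqueness} of the nearest codeword (so that $\varGamma_i(\alpha)$ is a genuine block and the equivariant map to $i$-subsets is well defined), and that the hypothesis $i\le s$ is what licenses invoking transitivity of $X$ on $C_i$ — both are immediate from the definitions but should be stated explicitly.
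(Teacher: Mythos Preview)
Your argument is correct and is precisely the standard proof of this statement. Note, however, that the present paper does not supply its own proof: Proposition~\ref{ihom} is quoted from \cite[Proposition~2.5]{ef2nt} and simply cited as a preliminary result. There is therefore nothing in this paper to compare your proof against; your write-up recovers what is essentially the argument one finds in the source reference, using the disjoint decomposition $C_i=\bigsqcup_{\gamma\in C}\varGamma_i(\gamma)$ (valid since $\delta\geq 2i+1$) as a system of imprimitivity for $X$, whence $X_\alpha$ is transitive on the block $\varGamma_i(\alpha)$, and then pushing this transitivity forward along the $X_\alpha$-equivariant surjection $\beta\mapsto\diff(\alpha,\beta)$ onto $i$-subsets of $M$.
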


%

\begin{proposition}\cite[Proposition~2.7]{ef2nt}\label{x12trans} 
 Let $C$ be an $(X,1)$-neighbour-transitive code in $H(m,q)$ with minimum distance $\delta\geq 3$ and $|C|>1$.  Then $X_i^{Q_i}$ acts $2$-transitively on $Q_i$ for all $i\in M$.
\end{proposition}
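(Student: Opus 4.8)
The plan is to apply Proposition~\ref{ihom} --- available since $\delta\geq 3$ --- to obtain transitivity of the codeword-stabiliser $X_\alpha$ on the sphere $\varGamma_1(\alpha)$ and on $M$, to exploit the natural block structure of this sphere to pin down a point-stabiliser of $X_i^{Q_i}$, and then, separately, to establish transitivity of $X_i^{Q_i}$ on $Q_i$ itself from the hypotheses on the code. The main obstacle is this last point: the block argument never witnesses the value $\alpha_i$ being moved (and is entirely vacuous when $q=2$), so transitivity on $Q_i$ must be extracted from $C$ rather than from the sphere.

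First I would record the consequences of the hypotheses. Since $\delta\geq 3$, Proposition~\ref{ihom} applies with $s=i=1$ and gives, for every $\alpha\in C$, that $X_\alpha$ fixes $\varGamma_1(\alpha)$ setwise, acts transitively on it, and acts transitively on $M$. In particular $X$ is transitive on $M$, so the groups $X_j^{Q_j}$ $(j\in M)$ are pairwise permutationally isomorphic and it suffices to treat one entry $i$. Moreover transitivity of $X$ on $C$ together with transitivity of each $X_\alpha$ on $M$ shows that $X$ is transitive on $C\times M$ under the diagonal action, whence the entry-stabiliser $X_i$ is transitive on $C$; I will use this at the end. Finally, if all codewords shared a common value at entry $i$, transitivity of $X$ on $M$ would force this at every entry and hence $|C|=1$, contrary to hypothesis; so at least two values of $Q_i$ occur among the codewords at entry $i$.

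Next, fix $\alpha\in C$ and let $L_i(\alpha)$ be the set of the $q-1$ vertices agreeing with $\alpha$ off entry $i$. The assignment to each vertex of $\varGamma_1(\alpha)$ of the unique entry in which it differs from $\alpha$ is $X_\alpha$-equivariant, so $\{L_j(\alpha):j\in M\}$ is a block system for $X_\alpha$ on $\varGamma_1(\alpha)$, the block $L_i(\alpha)$ having stabiliser $X_{\alpha,i}:=X_\alpha\cap X_i$; transitivity of $X_\alpha$ on $\varGamma_1(\alpha)$ then forces $X_{\alpha,i}$ to be transitive on $L_i(\alpha)$. Identifying $L_i(\alpha)$ with $Q_i\setminus\{\alpha_i\}$ via $\beta\mapsto\beta_i$, and noting that every element of $X_{\alpha,i}$ fixes $\alpha_i$ (it fixes both $\alpha$ and the entry $i$), the image of $X_{\alpha,i}$ in $\Sym(Q_i)$ lies in $(X_i^{Q_i})_{\alpha_i}$ and is transitive on $Q_i\setminus\{\alpha_i\}$; hence $(X_i^{Q_i})_{\alpha_i}$ is transitive on $Q_i\setminus\{\alpha_i\}$.

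It remains to show that $X_i^{Q_i}$ is transitive on $Q_i$, for then, having a point $\alpha_i$ whose stabiliser is transitive on the complement, it is $2$-transitive. If some element of $X_i^{Q_i}$ moved $\alpha_i$, the $X_i^{Q_i}$-orbit of $\alpha_i$ would meet $Q_i\setminus\{\alpha_i\}$, which by the previous paragraph is a single orbit of $(X_i^{Q_i})_{\alpha_i}$, so that orbit would be all of $Q_i$; thus either $X_i^{Q_i}$ is transitive on $Q_i$ or $\alpha_i$ is fixed by $X_i^{Q_i}$. In the latter case every $g\in X_i$ acts on $Q_i$ fixing $\alpha_i$, so $(\alpha^g)_i=\alpha_i$, and since $X_i$ is transitive on $C$ every codeword then has value $\alpha_i$ at entry $i$ --- contradicting the fact established above that at least two values occur there. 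Hence $X_i^{Q_i}$ is transitive, so $2$-transitive, on $Q_i$; and as $X$ is transitive on $M$ the same holds for every $i\in M$, completing the proof.
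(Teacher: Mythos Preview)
The paper does not supply its own proof of this proposition; it is quoted verbatim from \cite[Proposition~2.7]{ef2nt}. There is therefore nothing in the present paper to compare against.

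That said, your argument is correct and complete. The key steps all check out: Proposition~\ref{ihom} applies with $s=1$ since $\delta\geq 3$ gives $\lfloor(\delta-1)/2\rfloor\geq 1$; the partition $\{L_j(\alpha):j\in M\}$ of $\varGamma_1(\alpha)$ is indeed an $X_\alpha$-invariant block system, yielding transitivity of $(X_i^{Q_i})_{\alpha_i}$ on $Q_i\setminus\{\alpha_i\}$; and your handling of the remaining obstacle --- showing $\alpha_i$ is not a global fixed point of $X_i^{Q_i}$ --- is sound. The transitivity of $X$ on $C\times M$ (hence of $X_i$ on $C$) follows exactly as you say, and the observation that a common $i$th value across $C$ propagates via $X$-transitivity on $M$ to force $|C|=1$ is correct once one notes that for $g=h\sigma\in X$ with $i^\sigma=j$ one has $(\alpha^g)_j=\alpha_i^{h_i}$, so a constant $i$th column is sent to a constant $j$th column.
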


The next result gives information about the order of the stabiliser of a codeword in the automorphism group of a $2$-neighbour-transitive code.

\begin{lemma}\cite[Lemma~2.10]{ef2nt}\label{codeorder}
 If $C$ is an $(X,2)$-neighbour transitive code in $H(m,q)$ with $\delta\geq 5$ and $\b 0\in C$, then $\binom{m}{2} (q-1)^2$ divides $|X_{\b 0}|$, and hence $|X|$. In particular, if $|X_{\b 0}|=m(m-1)/2$ then $q=2$.
\end{lemma}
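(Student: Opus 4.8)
The plan is to prove Lemma~\ref{codeorder} by combining the $2$-homogeneity of codeword stabilisers (from Proposition~\ref{ihom}) with the $2$-transitivity of the alphabet action (from Proposition~\ref{x12trans}), and then extracting a divisibility statement by counting orbits carefully. Without loss of generality assume $\b 0 \in C$. Since $\delta \geq 5$, we have $\min\{2, \lfloor (\delta-1)/2\rfloor\} = 2$, so Proposition~\ref{ihom} (with $s=2$, $i=2$) tells us that $X_{\b 0}$ fixes setwise and acts transitively on $\varGamma_2(\b 0)$, and in particular $X_{\b 0}$ acts $2$-homogeneously on $M$. The first key step is to observe that a vertex in $\varGamma_2(\b 0)$ is specified by a choice of $2$-subset $\{i,j\}$ of $M$ together with a choice of nonzero alphabet value in each of the two entries $i$ and $j$; hence $|\varGamma_2(\b 0)| = \binom{m}{2}(q-1)^2$.

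The second step is to promote $2$-homogeneity to $2$-transitivity on the relevant structure. Here I would use the hypothesis $\delta \geq 5$ together with Proposition~\ref{ihom} applied also with $i=1$: $X_{\b 0}$ acts transitively on $\varGamma_1(\b 0)$, which has size $m(q-1)$, and this already forces $X_{\b 0}$ to act transitively on $M$. Combined with transitivity on $\varGamma_2(\b 0)$ and the $2$-transitivity of $X_i^{Q_i}$ on $Q_i$ from Proposition~\ref{x12trans}, one gets that the action of $X_{\b 0}$ on $\varGamma_2(\b 0)$ is transitive on a set of size $\binom{m}{2}(q-1)^2$. By the orbit–stabiliser theorem applied to this transitive action, $\binom{m}{2}(q-1)^2$ divides $|X_{\b 0}|$, and since $X_{\b 0} \leq X$ we get $\binom{m}{2}(q-1)^2 \mid |X|$ as well. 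The main obstacle — really the only subtle point — is making sure transitivity on $\varGamma_2(\b 0)$ is genuinely available rather than mere $2$-homogeneity on $M$; but this is exactly what Proposition~\ref{ihom} delivers (it asserts $X_\alpha$ acts transitively on $\varGamma_i(\alpha)$, not merely $i$-homogeneously on $M$), so the argument goes through directly.

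For the final sentence of the statement, suppose $|X_{\b 0}| = m(m-1)/2 = \binom{m}{2}$. Then the divisibility $\binom{m}{2}(q-1)^2 \mid |X_{\b 0}| = \binom{m}{2}$ forces $(q-1)^2 \mid 1$, hence $q-1 = 1$, i.e.\ $q = 2$. This completes the proof. I expect the write-up to be short: identify $|\varGamma_2(\b 0)|$, invoke the two cited propositions to get a transitive action on this set, apply orbit–stabiliser, and then read off the consequence for $q$ in the borderline case.
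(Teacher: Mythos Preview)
The paper does not supply its own proof of this lemma; it is quoted from \cite{ef2nt}. Your argument is correct and is the natural one (and almost certainly the one in the cited source): Proposition~\ref{ihom} with $s=i=2$ and $\delta\geq 5$ gives directly that $X_{\b 0}$ acts transitively on $\varGamma_2(\b 0)$, a set of size $\binom{m}{2}(q-1)^2$, and orbit--stabiliser yields the divisibility; the final sentence follows as you say.

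One minor remark: the appeal to Proposition~\ref{x12trans} is redundant, as you yourself observe at the end of your second paragraph. Proposition~\ref{ihom} already asserts transitivity of $X_{\b 0}$ on $\varGamma_2(\b 0)$ (not merely $2$-homogeneity on $M$), so there is no need to assemble that transitivity from separate ingredients. The clean write-up is simply: compute $|\varGamma_2(\b 0)|$, invoke Proposition~\ref{ihom}, apply orbit--stabiliser, and deduce $q=2$ in the borderline case.
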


The concept of an $(X,2)$-neighbour-transitive extension was introduced in \cite{ondimblock}.

\begin{definition}\label{xntextensiondef}
 Let $q=p^d$, $V=\F_p^{dm}$ and $W$ be a non-trivial $\F_p$-subspace of $V$. Identify $V$ with the vertex set of the Hamming graph $H(m,q)$. An \emph{$(X,2)$-neighbour-transitive extension} of $W$ is an $(X,2)$-neighbour-transitive code $C$ containing $\b 0$ such that $T_W\leq X$ and $K=K_W$, where $K=X\cap B$, $T_W$ is the group of translations by elements of $W$ and $K_W$ is the stabiliser of $W$ in $K$. Note that $T_W\leq X$ and $\b 0\in C$ means that $W\subseteq C$. If $C\neq W$ then the extension is said to be \emph{non-trivial}.
\end{definition}

The main result of \cite{ondimblock} is stated below. The importance of this result, in the present context, is that it classifies all binary $2$-neighbour-transitive codes with minimum distance at least $5$ that have a repetition subcode. 

\begin{theorem}\cite[Theorem~1.1]{ondimblock}\label{onedimensionaltheorem}
 Let $V=\F_p^{dm}$ be the vertex set of the Hamming graph $H(m,p^d)$ and $C$ be an $(X,2)$-neighbour-transitive extension of $W$ with $\delta\geq 5$, where $W$ is an $\F_p$-subspace of $V$ with $\F_p$-dimension $k\leq d$. Then $p=2$, $d=1$ and one of the following holds:
 \begin{enumerate}
  \item $C=W$ is the binary repetition code, with $\delta=m$,
  \item $C=\H$, where $\H$ is the Hadamard code of length $12$, as in \cite[Definition~4.1]{ef2nt}, with $\delta=6$, or,
  \item $C=\P$, where $\P$ is the punctured code of the Hadamard code of length $12$, as in \cite[Definition~4.1]{ef2nt}, with $\delta=5$.
 \end{enumerate}
 Moreover, in each case $C$ is an $(\Aut(C),2)$-neighbour-transitive extension of the binary repetition code in the appropriate Hamming graph.
\end{theorem}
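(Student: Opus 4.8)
\emph{Reduction to the binary case.}
The plan begins by reducing to $p=2$, $d=1$. The first step is to show that $W$ is a block of imprimitivity for the action of $X$ on $C$, so that in particular $W$ is invariant under the codeword-stabiliser $X_{\b 0}$ --- which, by Proposition~\ref{ihom} applied with $s=2$ and $\delta\geq 5$, acts $2$-homogeneously, hence primitively, on the entry set $M$. Next I would determine the shape of $W$: writing $W_i$ for the image of $W$ under the projection $V\to Q_i$, one gets $W_i\in\{0,Q_i\}$ for every $i$, since $K\nsub X$ and each $X_i^{Q_i}$ is $2$-transitive (Proposition~\ref{x12trans}), so $K^{Q_i}$ is trivial or transitive on $Q_i$, while $K=K_W$ forces $K^{Q_i}$ to stabilise $W_i$ and $T_W\leq X\cap B=K$ makes $K^{Q_i}$ nontrivial whenever $W_i\neq 0$. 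As $\{i:W_i\neq 0\}$ is $X_{\b 0}$-invariant and $X_{\b 0}^M$ is primitive, $W_i=Q_i$ for all $i$, so $\dim_{\F_p}W=d$ and, up to code equivalence, $W$ is the ``repetition code over $\F_q$''. Finally, if $q\geq 3$ then $X_{\b 0}$, stabilising this repetition code, acts on the weight-two vertices of $H(m,q)$ through a single alphabet-permutation applied diagonally to the two nonzero coordinates, and so cannot carry a weight-two vertex whose two nonzero values coincide to one whose two nonzero values differ; this contradicts the transitivity of $X_{\b 0}$ on $\varGamma_2(\b 0)$ given by Proposition~\ref{ihom}. Hence $q=2$, i.e.\ $p=2$, $d=1$, $k=1$, and $W$ is equivalent to the binary repetition code $\langle\b 1\rangle$, where $\b 1=(1^m)$.

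\emph{The binary picture.}
With $q=2$ the base group is $B=\F_2^m$, so every translation lies in $B$ and $T_W\leq K$; the only elements of $B$ stabilising $W=\{\b 0,\b 1\}$ being the identity and the translation by $\b 1$, we obtain $K=K_W=T_W\cong\Z_2$ and hence $|X|=2|X^M|$. Since $X_{\b 0}\cap K=1$, the codeword-stabiliser embeds in $\Sym(M)$, so $X_{\b 0}\cong X_{\b 0}^M$ is a $2$-homogeneous group of degree $m$. Because $W\subseteq C$ and $T_W\leq X$, the code $C$ is a union of cosets $\{v,v+\b 1\}$ of $W$, which $X$ permutes transitively with the coset $W$ stabilised by $X_{\b 0}\times T_W$. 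Identifying $\{v,v+\b 1\}$ with the unordered pair $\{\supp(v),M\setminus\supp(v)\}$, the hypothesis $\delta\geq 5$ becomes: every occurring pair other than $\{\emptyset,M\}$ has both parts of size at least $5$ (so $m\geq 10$ as soon as the extension is nontrivial), and any two distinct occurring pairs have symmetric difference of size in $[5,m-5]$.

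\emph{Classification via $2$-homogeneous groups.}
If no nontrivial coset occurs then $C=W$ is the binary repetition code with $\delta=m$, which is case~(1). Otherwise fix a nontrivial coset: its $X_{\b 0}$-orbit, read as a family $\D$ of $w$-subsets of $M$ with $5\leq w\leq m-5$, is invariant under the $2$-homogeneous group $X_{\b 0}^M$, so $\D$ (replacing blocks by their complements if necessary) is a $2$-$(m,w,\lambda)$ design in which, by the distance conditions above, distinct blocks meet in at most $w-3$ points. I would then invoke the classification of the finite $2$-homogeneous permutation groups and work through, in turn, the one-dimensional affine groups with $q\equiv 3\pmod 4$, the remaining affine $2$-transitive groups, and the almost-simple $2$-transitive groups ($\alt_m$, $\PSL_d(q)$, the Mathieu groups, $\Sp_{2t}(2)$ on each of its two orbits, $\PSU_3(q)$, $\Sz(q)$, $\Ree(q)$, $\HS$, $\Co_3$, and the rest), in each case combining the counts $b=\lambda m(m-1)/\bigl(w(w-1)\bigr)$ and $b\leq|X^M:X_{\b 0}^M|-1$ with the block-intersection and distance constraints to rule out all but finitely many pairs $(m,X_{\b 0}^M)$. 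Inspecting what survives leaves precisely the length-$12$ Hadamard code (with $X^M$ related to $\mg_{12}$ and $\delta=6$) and its length-$11$ puncture (with $X^M$ related to $\mg_{11}$ and $\delta=5$), which are cases~(2) and~(3); in each, minimality and the ``moreover'' assertion that $C$ is an $(\Aut(C),2)$-neighbour-transitive extension of the repetition code follow from the known descriptions of these codes and of their automorphism groups.

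\emph{Where the difficulty lies.}
The two demanding points are, first, making the reduction to $q=2$ fully rigorous --- in particular proving that $W$ is indeed a block and handling the possibly non-split extension $X$ of $X^M$ by $K$, whose presence means the induced action of $X^M$ on $\F_2^m/\langle\b 1\rangle$ may be merely affine rather than the linear permutation action --- and, second, in the classification, obtaining minimum-distance bounds sharp enough to eliminate the infinite families outright (most delicately $\PSL_d(q)$ and the one-dimensional affine groups) rather than merely to restrict their parameters.
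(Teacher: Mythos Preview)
The paper does not prove this theorem: it is quoted as \cite[Theorem~1.1]{ondimblock} and used (in the proof of Theorem~\ref{binaryx2ntchar}) as an imported result, with no argument supplied here. So there is no ``paper's own proof'' in this document to compare against.

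Your outline is nonetheless a reasonable reconstruction of how such a result is proved, and the reduction to $q=2$ is essentially correct once two points are made explicit. First, $W$ is a block for $X$ on $C$ simply because $K=K_W$ and $T_W\leq K$ force the $K$-orbit of $\b 0$ to equal $W$; since $K\nsub X$ this orbit is a block, and in particular $X_{\b 0}\leq X_W$. Second, after your projection argument gives $W_i=Q_i$ for all $i$ and hence $k=d$, passing to an equivalent code makes $W$ the genuine $q$-ary repetition code; then any $x=h\sigma\in X_{\b 0}$ with $W^x=W$ must have all $h_i$ equal, so $X_{\b 0}\leq\Diag_m(\Sym(Q)_0)\rtimes L$, which is precisely what your phrase ``single alphabet-permutation applied diagonally'' means and which delivers the $q\geq 3$ contradiction. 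The worry you raise about a non-split extension is not a real obstacle at this stage: in the binary case $K=T_W=\langle t_{\b 1}\rangle$ commutes with $L$, so $X_W=X_{\b 0}\times T_W$ and $|X^M:X_{\b 0}^M|=|C|/2$ exactly.

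Where your proposal remains a plan rather than a proof is the case analysis through the $2$-homogeneous groups. The inequality $b\leq |X^M:X_{\b 0}^M|-1$ is just ``an $X_{\b 0}$-orbit of nontrivial $W$-cosets has at most $|C|/2-1$ elements'', which is automatic and on its own rules out nothing; the genuine work in \cite{ondimblock} combines design identities with group-specific input (orbit lengths of $X_{\b 0}$ on subsets, self-orthogonality, explicit module structure) family by family. You have correctly located the difficulty, but none of those case arguments are present here, so as written this is a strategy and not yet a proof.
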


The concept of a design, introduced below, comes up frequently in coding theory. Let $\alpha\in H(m,q)$ and $0\in Q$. A vertex $\nu$ of $H(m,q)$ is said to be \emph{covered} by $\alpha$ if $\nu_i=\alpha_i$ for every $i\in M$ such that $\nu_i\neq 0$. A binary design, obtained by setting $q=2$ in the below definition, is usually defined as a collection of subsets of some ground set, satisfying equivalent conditions. We refer to the latter structures as combinatorial designs. In particular, the concept of covering a vertex just described, corresponds to containment of a subset.

\begin{definition}\label{desdef}
 A \emph{$q$-ary $s$-$(v,k,\lambda)$ design} in $\varGamma=H(m,q)$ is a subset $\mathcal{D}$ of vertices of $\varGamma_k(\b 0)$ (where $k\geq s$) such that each vertex $\nu \in\varGamma_s(\b 0)$ is covered by exactly $\lambda$ vertices of $\mathcal{D}$. When $q=2$, $\D$ is simply the set of characteristic vectors of a combinatorial $s$-design. The elements of $\D$ are called \emph{blocks}.
\end{definition}

The following equations can be found, for instance, in \cite{stinson2004combinatorial}. Let $\D$ be a binary $s$-$(v,k,\lambda)$ design with $|\D|=b$ blocks and let $r$ be the number of blocks incident with a point. Then $vr=bk$, $r(k-1)=\lambda(v-1)$ and \[b=\frac{v(v-1)\cdots(v-s+1)}{k(k-1)\cdots(k-s+1)}\lambda.\]

\begin{lemma}\cite[Lemma~2.16]{ef2nt}\label{design}
 Let $C$ be an $(X,s)$-neighbour transitive code in $H(m,q)$. Then $C$ is $s$-regular. Moreover, if $ \b 0\in C$ and $\delta\geq 2s$ then for each $k\leq m$, the set of codewords of weight $k$ forms a $q$-ary $s$-$(m,k,\lambda)$ design, for some $\lambda$.
\end{lemma}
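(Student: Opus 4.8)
The plan is to prove the two assertions separately: first that $s$-neighbour-transitivity implies $s$-regularity, and then the statement about designs. For the first part, recall that $s$-regularity (in the sense of Delsarte) asks that for each $i\in\{0,1,\ldots,s\}$, the number of codewords at each distance $j$ from a vertex $\alpha\in C_i$ depends only on $i$ and $j$, not on the choice of $\alpha$; more precisely, the relevant intersection numbers are well-defined on $C_i$ for $i\le s$. First I would fix $i\le s$ and two vertices $\alpha,\alpha'\in C_i$. Since $C$ is $(X,s)$-neighbour-transitive, $X$ acts transitively on $C_i$, so there is $g\in X\le\Aut(C)$ with $\alpha^g=\alpha'$. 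Because $g$ preserves $C$ and preserves Hamming distance (it lies in $\Aut(\varGamma)=B\rtimes L$), it maps the set of codewords at distance $j$ from $\alpha$ bijectively onto the set of codewords at distance $j$ from $\alpha'$, and similarly for the other intersection-number data appearing in the definition of $s$-regularity. Hence those numbers agree for $\alpha$ and $\alpha'$, which is exactly $s$-regularity.

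For the second part, assume $\b 0\in C$ and $\delta\ge 2s$, fix $k\le m$, and let $\D$ be the set of codewords of weight $k$, viewed as a subset of $\varGamma_k(\b 0)$ (if $k<s$ then $\D$ is empty and there is nothing to prove, or one takes $\lambda=0$; so assume $k\ge s$). I must show there is a constant $\lambda$ such that every $\nu\in\varGamma_s(\b 0)$ is covered by exactly $\lambda$ elements of $\D$. The key point is that, by Proposition~\ref{ihom} applied with $\alpha=\b 0$ (valid since $s\le\lfloor(\delta-1)/2\rfloor$ is \emph{not} quite what we have — we only have $\delta\ge 2s$, so $\lfloor(\delta-1)/2\rfloor\ge s-\tfrac12$, giving $\lfloor(\delta-1)/2\rfloor\ge s$ when $\delta\ge 2s$ unless equality forces care; in fact $\delta\ge 2s$ gives $\delta-1\ge 2s-1$ so $\lfloor(\delta-1)/2\rfloor\ge s-1$, and when $\delta=2s$ one instead uses that $C_s=\cup_{\beta\in C}\varGamma_s(\beta)$), the stabiliser $X_{\b 0}$ acts transitively on $\varGamma_s(\b 0)$. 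Granting transitivity of $X_{\b 0}$ on $\varGamma_s(\b 0)$: for $\nu,\nu'\in\varGamma_s(\b 0)$ pick $g\in X_{\b 0}$ with $\nu^g=\nu'$; then $g$ fixes $\b 0$, preserves $C$ hence preserves $\D$ (weight is the distance from $\b 0$, which $g$ fixes), and preserves the covering relation since covering is defined entry-by-entry in terms of the distinguished element $0$, which $g_i\in\Sym(Q_i)$ must fix as $g$ fixes $\b 0$. Therefore $g$ induces a bijection from $\{D\in\D : D \text{ covers }\nu\}$ onto $\{D\in\D : D\text{ covers }\nu'\}$, so the count is independent of $\nu$; call it $\lambda$. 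By Definition~\ref{desdef} this says $\D$ is a $q$-ary $s$-$(m,k,\lambda)$ design.

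The main obstacle — and the one place the argument needs genuine care rather than routine bookkeeping — is justifying transitivity of $X_{\b 0}$ on $\varGamma_s(\b 0)$ in the boundary case $\delta=2s$, where Proposition~\ref{ihom} does not directly apply. I would handle this by noting that even when $\delta=2s$ one still has $C_s=\cup_{\beta\in C}\varGamma_s(\beta)$ (stated just after Table~\ref{hammingnotation}), and that $X$ is transitive on $C$ and on $C_s$; a short orbit-counting or stabiliser argument then transfers transitivity on $C_s$ to transitivity of $X_{\b 0}$ on the ``fibre'' $\varGamma_s(\b 0)$, using that distinct codewords are too far apart for their radius-$s$ balls to meet (each element of $C_s$ lies in $\varGamma_s(\beta)$ for at least one, and the union need not be disjoint when $\delta=2s$, so one argues that the $X$-action on $C_s$ together with the $X_{\b 0}$-orbits partitions compatibly). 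Once transitivity on $\varGamma_s(\b 0)$ is in hand, the rest is immediate from the equivariance observations above.
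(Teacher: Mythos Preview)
The paper does not supply its own proof of this lemma: it is quoted from \cite[Lemma~2.16]{ef2nt} with no argument given, so there is nothing in the present paper to compare your attempt against. Judged on its own, your proof of $s$-regularity is correct and standard, and your design argument is correct whenever $\delta\ge 2s+1$, since then Proposition~\ref{ihom} directly yields transitivity of $X_{\b 0}$ on $\varGamma_s(\b 0)$ and the equivariance reasoning you give (elements of $X_{\b 0}$ fix $0$ in each coordinate and hence preserve weight, the code, and the covering relation) goes through cleanly.

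The genuine gap is exactly where you flag it: the boundary case $\delta=2s$. You try to recover transitivity of $X_{\b 0}$ on $\varGamma_s(\b 0)$ from the facts that $X$ is transitive on $C$ and on $C_s$ and that $\varGamma_s(\b 0)\subseteq C_s$. But this inference is not valid in general. What you would need is transitivity of $X$ on the flag set $\{(\beta,\nu):\beta\in C,\ \nu\in\varGamma_s(\beta)\}$, and transitivity on the two ``sides'' $C$ and $C_s$ of this bipartite incidence does not imply flag-transitivity; the phrase ``a short orbit-counting or stabiliser argument'' does not supply the missing step. Concretely: given $\nu,\nu'\in\varGamma_s(\b 0)$ and $g\in X$ with $\nu^g=\nu'$, one only obtains $\b 0^g\in C$ at distance $s$ from $\nu'$, and when $\delta=2s$ there may be several such codewords, so nothing forces $\b 0^g=\b 0$ or produces an element of $X_{\nu'}$ carrying $\b 0^g$ back to $\b 0$. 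Since every application of this lemma in the present paper has $\delta\ge 5$ with $s=2$, the boundary case is irrelevant here; but for a complete proof of the lemma as stated you should either consult the original argument in \cite{ef2nt} or find a direct argument for $\delta=2s$ that does not rely on transitivity of $X_{\b 0}$ on $\varGamma_s(\b 0)$.
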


\section{Modules as blocks of imprimitivity}\label{extensionsection}

This section investigates the structure of an $X$-alphabet-affine code $C$ via the group $X$. When the kernel $K$ of the action of $X$ on $M$ does not act transitively on $C$ a system of imprimitivity can be identified. For a group $G$ and a prime $p$ the \emph{$p$-core} $O_p(G)$ is the largest normal $p$-subgroup of $G$.

\begin{lemma}\label{opkiist}
 Let $C$ be an $X$-alphabet-affine code in $H(m,q)$, where $q=p^d$ for some prime $p$, and $i\in M$. Then the $p$-core $O_p\left(K^{Q_i}\right)$ of $K^{Q_i}$ is the unique minimal normal subgroup of $X_i^{Q_i}$.
\end{lemma}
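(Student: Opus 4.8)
The plan is to reduce the statement to standard facts about $2$-transitive affine groups together with one observation on how the kernel $K$ acts on each alphabet. I would write $G=X_i^{Q_i}$; by hypothesis this is a $2$-transitive affine group on $Q_i$, so $|Q_i|=q=p^d$, and by the structure of affine $2$-transitive groups (see, e.g., \cite{dixon1996permutation}) the group $G$ has a unique minimal normal subgroup, namely its translation subgroup $N$, which is elementary abelian of order $p^d$ and equals $O_p(G)$. Thus the claim is precisely that $O_p(K^{Q_i})=N$.

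First I would establish that $K^{Q_i}\neq 1$ and $K^{Q_i}\nsub G$. For non-triviality: since $K\nsub X$, conjugation by $x\in X$ identifies the permutation group $K^{Q_j}$ induced by $K$ on $Q_j$ with the one induced by $K=K^x$ on $Q_{jx}$, so $\{j\in M\mid K^{Q_j}=1\}$ is $X$-invariant; as $X$ is transitive on $M$ and $K\neq 1$ (if $K^{Q_j}=1$ for all $j$ then $K=1$), it follows that $K^{Q_j}\neq 1$ for all $j$, in particular $K^{Q_i}\neq 1$. For normality: $K$ fixes $M$ pointwise, hence $K\leq X_i$, and $K\nsub X$ gives $K\nsub X_i$, so the surjection $X_i\to X_i^{Q_i}=G$ carries the normal subgroup $K$ onto the normal subgroup $K^{Q_i}$ of $G$.

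It then remains to prove two containments. Since $N$ is the unique minimal normal subgroup of $G$, it lies inside every non-trivial normal subgroup of $G$ (such a subgroup contains some minimal normal subgroup of the finite group $G$, which must be $N$); hence $N\leq K^{Q_i}$, and as $N\nsub G$ also $N\nsub K^{Q_i}$, so the $p$-group $N$ lies in $O_p(K^{Q_i})$. Conversely $O_p(K^{Q_i})$ is characteristic in $K^{Q_i}\nsub G$, hence normal in $G$, and being a normal $p$-subgroup it lies in $O_p(G)=N$. Therefore $O_p(K^{Q_i})=N$, the unique minimal normal subgroup of $X_i^{Q_i}$. I do not anticipate a substantive obstacle; the only points needing mild care are the verification that $K^{Q_i}\neq 1$ and the standard identification $O_p(G)=\soc(G)$ for affine $G$, the latter holding because the point stabiliser of $G$ acts irreducibly on $N$ and therefore has trivial $p$-core, whence $O_p(G)$ can be no larger than $N$.
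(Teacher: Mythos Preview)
Your proof is correct and follows essentially the same route as the paper's: both show that $K^{Q_i}$ is a non-trivial normal subgroup of $G=X_i^{Q_i}$, hence contains the translation subgroup $N=T_i$, and then sandwich $O_p(K^{Q_i})$ between $N$ and $O_p(G)=N$. The only cosmetic difference is that the paper proves $O_p(G)=T_i$ by an explicit orbit-length argument on the point stabiliser, whereas you invoke irreducibility of the point stabiliser on $N$ to conclude it has trivial $p$-core; both arguments are standard and equivalent in content.
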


\begin{proof}
 The fact that $C$ is $X$-alphabet-affine implies, by Definition~\ref{efaasaadef}, that $X^M$ is transitive, $K\neq 1$ and $X_i^{Q_i}$ is a $2$-transitive affine group. It follows that $X_i$ and $X_j$ are conjugate in $X$ for all $i,j\in M$. Since $K\vartriangleleft X$ it follows that $K^{Q_i}\vartriangleleft X_i^{Q_i}$ for each $i\in M$, and since $X^M$ is transitive it follows that $K^{Q_i}$ is isomorphic to $K^{Q_j}$. Now $K\leq \prod_{i\in M}K^{Q_i}$ which implies, since $K\neq 1$, that $K^{Q_i}\neq 1$ for all $i\in M$. By hypothesis, we have $X_i^{Q_i}\cong T_i\rtimes G_0$, where $T_i\cong \Z_p^d$ is the minimal normal subgroup of $X_i^{Q_i}$ and $G_0$ acts transitively on $Q_i^\times$. Hence $K^{Q_i}$ contains $T_i$ as a normal subgroup. Since $T_i$ is a normal $p$-subgroup of $K^{Q_i}$, it is contained in $O_p\left(K^{Q_i}\right)$. Since $O_p\left(K^{Q_i}\right)$ is a characteristic subgroup of $K^{Q_i}$ it follows that $O_p\left(K^{Q_i}\right)\leq O_p\left(X_i^{Q_i}\right)$. We claim that $O_p\left(X_i^{Q_i}\right)=T_i$, from which the result follows. 
 
 Now $P:=O_p\left(X_i^{Q_i}\right)\cap G_0$ is a normal $p$-subgroup of $G_0$ such that $O_p\left(X_i^{Q_i}\right)=T_i\rtimes P$. Since $G_0$ is transitive on $Q_i^\times$, and since $P\nsub G_0$, all the $P$-orbits have the same length, say $u$, and $u$ divides $|Q_i^\times|=p^d-1$. However, $u$ also divides $|P|$, and so $u$ is a $p$-power. This implies that $u=1$ and hence that $P=1$. Thus $O_p\left(X_i^{Q_i}\right)=T_i$.
\end{proof}

\begin{lemma}\label{opGiszptodm}
 Let $C$ be an $X$-alphabet-affine code in $H(m,q)$ where $q=p^d$. Let $G=\prod_{i\in M}K^{Q_i}$ and $T_i$ be the minimal normal subgroup of $X_i^{Q_i}$. Then $O_p(G)= \prod_{i\in M} T_i$ acts regularly on the vertex set of $H(m,q)$. 
\end{lemma}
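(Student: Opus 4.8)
The plan is to read off the structure of $O_p(G)$ from Lemma~\ref{opkiist} together with the standard behaviour of the $p$-core under direct products, and then to verify regularity by a direct computation of point stabilisers.

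First I would record the input from Lemma~\ref{opkiist}: since $C$ is $X$-alphabet-affine, for each $i\in M$ we have $O_p(K^{Q_i})=T_i$, the unique minimal normal subgroup of $X_i^{Q_i}$, with $T_i\cong\Z_p^d$. I also need that $T_i$ acts regularly on $Q_i$: as $X_i^{Q_i}$ is $2$-transitive, hence primitive, on $Q_i$, the nontrivial normal subgroup $T_i$ is transitive on $Q_i$, and since $|T_i|=p^d=|Q_i|$ this action is regular.

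Next I would pass to the direct product. Regard $G=\prod_{i\in M}K^{Q_i}$ as a subgroup of the base group $B=\prod_{i\in M}\Sym(Q_i)\leq\Aut(\varGamma)$, acting coordinatewise on $V\varGamma=\prod_{i\in M}Q_i$. For a finite direct product the $p$-core is the direct product of the $p$-cores of the factors: indeed $\prod_{i\in M}O_p(K^{Q_i})$ is a normal $p$-subgroup of $G$, and conversely the projection of $O_p(G)$ to each coordinate is a normal $p$-subgroup of $K^{Q_i}$ and hence lies in $O_p(K^{Q_i})$, so that $O_p(G)\leq\prod_{i\in M}O_p(K^{Q_i})$. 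Therefore $O_p(G)=\prod_{i\in M}O_p(K^{Q_i})=\prod_{i\in M}T_i$.

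Finally I would check that $N:=\prod_{i\in M}T_i$ acts regularly on $V\varGamma$. The stabiliser in $N$ of a vertex $\alpha=(\alpha_i)_{i\in M}$ is $\prod_{i\in M}(T_i)_{\alpha_i}=1$, since each $T_i$ is regular, hence semiregular, on $Q_i$; so $N$ acts semiregularly. It is also transitive, because given $\alpha,\beta\in V\varGamma$ one can choose $t_i\in T_i$ with $\alpha_i^{t_i}=\beta_i$ for each $i$, and then $(t_i)_{i\in M}\in N$ maps $\alpha$ to $\beta$. Hence $O_p(G)=N$ is regular on $V\varGamma$. I do not anticipate any real obstacle here: the only points needing a little care are quoting $O_p(K^{Q_i})=T_i$ correctly from Lemma~\ref{opkiist} (with the regularity of $T_i$ on $Q_i$ coming from the affine hypothesis as above) and the elementary fact that $O_p$ commutes with finite direct products.
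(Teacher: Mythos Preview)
Your proposal is correct and follows essentially the same route as the paper: both containments $\prod_i T_i\leq O_p(G)$ and $O_p(G)\leq\prod_i T_i$ are obtained exactly as you describe, the second via projecting $O_p(G)$ to each coordinate and invoking Lemma~\ref{opkiist}, and regularity is deduced from the coordinatewise regularity of each $T_i$ on $Q_i$. The only difference is cosmetic: you package the two containments as ``$O_p$ commutes with finite direct products'' and supply a little extra justification for the regularity of $T_i$ on $Q_i$, which the paper simply asserts.
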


\begin{proof}
 Let 
 \[
  T=\prod_{i\in M} T_i\cong \Z_p^{dm}.
 \]
 Then $T$ is a normal $p$-subgroup of $G$ and hence is contained in $N=O_p(G)$. Let $N_i=N^{Q_i}\nsub G^{Q_i}=K^{Q_i}$, for each $i\in M$. Since $N$ is a $p$-group, so is $N_i$. Thus, by Lemma~\ref{opkiist}, $N_i\leq T_i$, and so $N\leq \prod_{i\in M} N_i\leq \prod_{i\in M} T_i=T$ which implies that $T=N$. Finally, $T_i$ acts regularly on $Q_i$, so that $O_p(G)=\prod_{i\in M} T_i$ acts regularly on $\prod_{i\in M} Q_i$, the vertex set of $H(m,q)$.
\end{proof}

Lemma~\ref{orbitofopisamodule} presents the main idea of this section.

\begin{lemma}\label{orbitofopisamodule}
 Let $C$ be an $X$-alphabet-affine code in $H(m,q)$ with $q=p^d$, and $G$, $T_i$ be as in Lemma~\ref{opGiszptodm}. Then $O_p(K)=K\cap O_p(G)$ is normal in $X$, the orbit of the vertex $\b 0$ under $O_p(K)$ is an $\F_p X_{\b 0}$-module and $K=O_p(K)\rtimes K_{\b 0}$.
\end{lemma}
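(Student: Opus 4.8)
The plan is to chase the three assertions in order, each one feeding into the next. First I would establish that $O_p(K) = K \cap O_p(G)$. Since $K \nsub X$ and, by the argument in Lemma~\ref{opkiist}, $X_i$ and $X_j$ are conjugate for all $i,j$ (so $K$ embeds in $G = \prod_{i\in M} K^{Q_i}$ with $K^{Q_i} \nsub X_i^{Q_i}$), the group $O_p(G) = \prod_{i \in M} T_i$ from Lemma~\ref{opGiszptodm} is a $p$-group. Hence $K \cap O_p(G)$ is a normal $p$-subgroup of $K$, giving $K \cap O_p(G) \leq O_p(K)$. Conversely, $O_p(K)$ is a normal $p$-subgroup of $K$; projecting into each $K^{Q_i}$ gives a normal $p$-subgroup of $K^{Q_i}$, which lies in $O_p(K^{Q_i}) = T_i$ by Lemma~\ref{opkiist}, so $O_p(K) \leq \prod_{i\in M} T_i = O_p(G)$, and combined with $O_p(K) \leq K$ we get $O_p(K) \leq K \cap O_p(G)$. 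This proves equality.

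Next, normality in $X$: $O_p(K)$ is characteristic in $K$ (it is the $p$-core), and $K \nsub X$, so $O_p(K) \nsub X$. Since $X = T_C \rtimes X_{\b 0}$ and $T_C \leq K$ (translations fix $M$ pointwise), the subgroup $O_p(K)$ is normalised by $X_{\b 0}$. Now consider the orbit $W := {\b 0}^{O_p(K)}$. Because $O_p(K) \leq O_p(G) = \prod_{i\in M} T_i$ acts on $V\varGamma = \prod_{i\in M} Q_i$ coordinatewise by translations (each $T_i \cong \Z_p^d$ acting regularly on $Q_i$, identified with $\F_p^d$), the orbit $W$ is precisely the $\F_p$-subspace of $V\varGamma$ corresponding to $O_p(K)$ under the identification $O_p(G) \cong \Z_p^{dm} \cong V\varGamma$; in particular $W$ is an $\F_p$-subspace. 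Since $X_{\b 0}$ normalises $O_p(K)$ and fixes $\b 0$, it permutes $W$, and as $X_{\b 0} \leq \GL(V\varGamma)$ acts $\F_p$-linearly (it fixes the origin and respects the group structure of the translation group), $W$ is an $\F_p X_{\b 0}$-submodule of $V\varGamma$, hence an $\F_p X_{\b 0}$-module.

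Finally, the split decomposition $K = O_p(K) \rtimes K_{\b 0}$. Since $O_p(K)$ acts on $V\varGamma$ by translations, it acts \emph{semiregularly}; in particular the stabiliser $(O_p(K))_{\b 0} = 1$, so $O_p(K) \cap K_{\b 0} = 1$. For the product $O_p(K)\, K_{\b 0} = K$: given $g \in K$, the vertex ${\b 0}^g$ lies in $K \cdot \b 0$, and I need this to lie in $W = O_p(K)\cdot \b 0$, equivalently $K$ and $O_p(K)$ have the same orbit of $\b 0$. This is where I expect the real work to be, and I would obtain it from the hypotheses: the standing assumption of this section (as in the section heading and Definition~\ref{xntextensiondef}/Theorem~\ref{onedimensionaltheorem} setup) is that $K$ does \emph{not} act transitively on $C$ but rather that the relevant block structure forces ${\b 0}^K$ to be exactly a $K^{Q_i}$-type orbit in each coordinate — concretely, ${\b 0}^K \subseteq \prod_{i\in M}({\b 0}^{K^{Q_i}})$ and each ${\b 0}^{K^{Q_i}} = {\b 0}^{T_i}$ since $T_i$ is transitive on $Q_i$ and $K^{Q_i} \geq T_i$, so in fact ${\b 0}^{K^{Q_i}} = Q_i$; but then the orbit ${\b 0}^{O_p(K)} = W$ must be shown to capture all of ${\b 0}^K$. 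The cleanest route: for $g \in K$, write its coordinatewise action; since $g$ normalises nothing extra but $K/O_p(K)$ embeds (via $K_{\b 0}$) into $\prod X_{i,{\b 0}}^{Q_i}$, which fixes $0$ in each coordinate, the "translation part" of $g$ already lies in $O_p(G) \cap K = O_p(K)$, so ${\b 0}^g \in W$. Thus $K = O_p(K) K_{\b 0}$, and with trivial intersection and normality of $O_p(K)$ in $K$, we conclude $K = O_p(K) \rtimes K_{\b 0}$. The main obstacle is making the "translation part of $g$ lies in $O_p(K)$" step rigorous; I would do this by exhibiting, for each $g \in K$, the translation $t_{{\b 0}^g}$ and checking $g\, t_{{\b 0}^g}^{-1} \in K_{\b 0}$ and $t_{{\b 0}^g} \in O_p(G) \cap K = O_p(K)$, the latter because $t_{{\b 0}^g} = g (g t_{{\b 0}^g}^{-1})^{-1} \in K$ and it is a translation hence in $O_p(G)$.
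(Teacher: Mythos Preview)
Your arguments for $O_p(K)=K\cap O_p(G)$, for $O_p(K)\nsub X$, and for the $\F_p X_{\b 0}$-module structure on $W=\b 0^{O_p(K)}$ are correct and essentially match the paper's. (One aside: nothing in the hypotheses gives $X=T_C\rtimes X_{\b 0}$; fortunately you don't actually need it, since $O_p(K)\nsub X$ already yields that $X_{\b 0}$ normalises $O_p(K)$.)

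The gap is in the final claim $K=O_p(K)\rtimes K_{\b 0}$. Your proposed argument is circular: to place $t_{\b 0^g}$ in $K$ you write it as $g\,(g\,t_{\b 0^g}^{-1})^{-1}$ and appeal to $g\,t_{\b 0^g}^{-1}\in K_{\b 0}\leq K$, but membership of $g\,t_{\b 0^g}^{-1}$ in $K$ already presupposes $t_{\b 0^g}\in K$. The preceding ``cleanest route'' is likewise not a proof: an abstract embedding $K/O_p(K)\hookrightarrow G/O_p(G)\cong G_{\b 0}$ does not force the $O_p(G)$-component of each $g\in K$ to lie in $K$. The paper dispatches this step in one line, deducing $K=O_p(K)\rtimes K_{\b 0}$ directly from $G=O_p(G)\rtimes G_{\b 0}$; but a semidirect decomposition of an overgroup does not in general restrict to one of an arbitrary subgroup, and under the bare hypotheses stated the conclusion can actually fail. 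For instance, with $m=2$, $q=4$, $X_i^{Q_i}=\AGL_2(2)\cong\s_4$, and $K=\{(g,hgh^{-1}):g\in\s_4\}$ for a nontrivial translation $h$ (extended to an $X$ transitive on $M$ by the coordinate swap), one computes $|O_p(K)|\cdot|K_{\b 0}|=4\cdot 2=8<24=|K|$. So this step genuinely requires more input than either argument supplies; in the paper's application (Proposition~\ref{orbitof0is2ntmodule}) the extra hypotheses $\delta\geq 5$ and $(X,2)$-neighbour-transitivity are available, and one should expect them to enter here.
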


\begin{proof}
 Now $O_p(K)\leq \prod_{i\in M}O_p(K)^{Q_i}$ and for each $i$, $O_p(K)^{Q_i}$ is a normal $p$-subgroup of $X_i^{Q_i}$. Thus $O_p(K)^{Q_i}\leq O_p(K^{Q_i})$ which, by Lemma~\ref{opkiist}, is $T_i$. That is,
 \[
  O_p(K)\leq \prod_{i\in M}O_p(K)^{Q_i}\leq \prod_{i\in M}T_i = O_p(G),
 \]
 by Lemma~\ref{opGiszptodm}. Since $O_p(K)\leq K$ we have $O_p(K)\leq K\cap O_p(G)$, and since $K\cap O_p(G)$ is a $p$-group that is normal in $K$ (as $O_p(G)\nsub G$), equality holds. Now $O_p(K)$ is a characteristic subgroup of $K$ and hence is normal in $X$. In particular, $O_p(K)$ is $X_{\b 0}$-invariant and hence the orbit ${\b 0}^{O_p(K)}$ is an $\F_p X_{\b 0}$-submodule of $T=\prod_{i\in M}T_i\cong \F_p^{dm}$. Now, $G_{\b 0}=\prod_{i\in M}K_{\b 0}^{Q_i^\times}$, so that $O_p(G)\cap G_{\b 0}=1$. Also, $O_p(G)=T$ is transitive on the set of vertices of $H(m,q)$, by Lemma~\ref{opGiszptodm}, so that $G=O_p(G)\rtimes G_{\b 0}$. Thus $K=O_p(K)\rtimes K_{\b 0}$. 
\end{proof}

The next lemma is needed in order to prove Proposition~\ref{orbitof0is2ntmodule}.

\begin{lemma}\label{xijtransonqiqj}
 Let $C$ be an $X$-alphabet-affine and $(X,2)$-neighbour-transitive code in $H(m,q)$ with $\delta\geq 5$, and $C\neq\Rep(m,2)$ if $q=2$, such that the group $T_C$ of translations by codewords of $C$ is contained in $X$. Then $X_{i,j}$ acts transitively on $Q_i\times Q_j$.
\end{lemma}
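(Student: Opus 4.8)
The plan is to combine the structural result of Lemma~\ref{orbitofopisamodule} with the $2$-neighbour-transitivity hypothesis, using the fact that $\delta\geq 5$ means each element of $C_2$ has a unique nearest codeword. First I would set $W := {\b 0}^{O_p(K)}$, which by Lemma~\ref{orbitofopisamodule} is an $\F_p X_{\b 0}$-submodule of $T=\prod_{i\in M}T_i\cong\F_p^{dm}$ contained in $C$ (since $\b 0\in C$ and $O_p(K)\leq K\leq X$). Fix distinct entries $i,j\in M$. I want to show that for every pair $(a,b)\in Q_i\times Q_j$ there is an element of $X_{i,j}$ (the setwise stabiliser of both entries $i$ and $j$) carrying $(0,0)$ to $(a,b)$ in coordinates $i,j$. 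The first reduction is to the case where one of $a,b$ is zero: once I know $X_{i,j}$ is transitive on $Q_i\times\{0\}$ and on $\{0\}\times Q_j$ separately (which follows from Proposition~\ref{ihom}, since $X_{\b 0}$ is $2$-homogeneous, hence transitive on entries and on pairs, and $X_i^{Q_i}$ is $2$-transitive on $Q_i$ by Proposition~\ref{x12trans}), I can try to produce arbitrary pairs by composing.

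The key step is to exhibit, inside $C$, a codeword supported on exactly the two entries $\{i,j\}$ with prescribed nonzero values there. Here is where $W$ and the design structure enter. Because $C$ is $(X,2)$-neighbour-transitive with $\delta\geq 5$, Proposition~\ref{ihom} gives that $X_{\b 0}$ fixes $\varGamma_2(\b 0)$ setwise and acts transitively on it; this pins down the orbit structure on weight-$2$ vertices and, via Lemma~\ref{design}, forces the weight-$2$ part of $C$ (if nonempty) to be a $q$-ary $2$-design, hence if one weight-$2$ codeword exists then weight-$2$ codewords hit every pair of entries. So the real content is: either $C$ already contains a weight-$2$ codeword, or one must rule this out and argue differently. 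Since $\delta\geq 5 > 2$, there are in fact \emph{no} weight-$2$ codewords, so I cannot get the pair $(a,b)$ with both $a,b\neq 0$ directly as a codeword translation. Instead I would use $W$: if $W$ contains a weight-$2$ vector, then translating by it moves $(0,0)$ to $(a,b)$ with the appropriate nonzero entries in positions $i,j$ — but again $W\subseteq C$ forces $\wt\geq 5$ on nonzero elements of $W$, so $W$ has no weight-$2$ vectors either. The resolution must therefore be genuinely group-theoretic rather than code-theoretic: I should instead find an element $g\in X_{i,j}$ directly.

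Concretely, the approach I expect to work: by Proposition~\ref{ihom}, $X_{\b 0}$ is transitive on ordered pairs of entries, so $X_{\b 0}$ contains elements fixing $\{i,j\}$ setwise and acting as the nontrivial swap on $\{i,j\}$. The stabiliser $X_{i,j}$ surjects (via restriction) onto a subgroup of $X_i^{Q_i}\times X_j^{Q_j}$; call the image $H\leq X_i^{Q_i}\times X_j^{Q_j}$. I must show $H$ is all of $Q_i\times Q_j$ under the induced action on $Q_i\times Q_j$, equivalently that $H$ projects onto $X_i^{Q_i}$ and onto $X_j^{Q_j}$ and that the kernel of one projection acts transitively on the other factor's $Q$. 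The projection onto each factor is surjective because $X_{\b 0}\leq X_{i,j}$ already induces the full $2$-transitive group $X_i^{Q_i}$ on $Q_i$ when we also have freedom in entry $j$; more carefully, $X_{i}$ induces $X_i^{Q_i}$ on $Q_i$ and $X_{i,j}$ has index in $X_i$ dividing the number of entries, while the action on $Q_i$ is by a group normalised by the $2$-transitive $X_i^{Q_i}$ — here I would invoke Lemma~\ref{opkiist}: the kernel $K$ induces the full translation group $T_i\cong\Z_p^d$ on $Q_i$, and $K\leq X_{i,j}$ (since $K$ fixes $M$ pointwise), so translations in entry $j$ are available independently of entry $i$. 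Thus $K^{Q_i}\times K^{Q_j}\supseteq T_i\times T_j$ sits inside $H$, giving full translation transitivity, and combined with the point stabiliser freedom from $X_{\b 0}$ (which contributes $G_0$-type elements in each factor) one gets that $H$ acts transitively on $Q_i\times Q_j$.

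I expect the main obstacle to be the exclusion of $C=\Rep(m,2)$ and the role of the hypothesis $T_C\leq X$: without $T_C\leq X$ one cannot guarantee $O_p(K)$ is large enough, and $\Rep(m,2)$ is precisely the degenerate case where $K$ is small (the alphabet action is $\mathrm{S}_2$ with trivial core interplay) so that $K^{Q_j}$ need not supply an independent translation in entry $j$; the hypotheses are arranged to sidestep exactly that. So the delicate point is verifying that under $T_C\leq X$, $\delta\geq 5$, and $C\neq\Rep(m,2)$, the kernel $K$ genuinely induces $T_i$ on each $Q_i$ (this is Lemma~\ref{opkiist}) \emph{and} that these induced translations in distinct entries are independent — i.e.\ that $K^{Q_i\times Q_j}\geq T_i\times T_j$, not merely a diagonal. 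For $q=2$ this independence is where $C\neq\Rep(m,2)$ is used: if the only kernel elements acted as the diagonal $\Diag_m$, then $O_p(K)$ would be the repetition code and $C$ would be forced to be $\Rep(m,2)$ by the classification in Theorem~\ref{onedimensionaltheorem}, contrary to hypothesis. Once independence of the $T_i$'s is in hand, transitivity of $X_{i,j}$ on $Q_i\times Q_j$ follows by composing a translation with an element of $X_{\b 0}$ stabilising $\{i,j\}$.
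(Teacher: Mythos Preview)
Your central claim---that $K^{Q_i\times Q_j}\geq T_i\times T_j$---is never established, and this is the crux of the lemma. Lemma~\ref{opkiist} gives $K^{Q_i}\geq T_i$ for each $i$, but the image of $K$ in $\Sym(Q_i)\times\Sym(Q_j)$ is only a \emph{subdirect} product of $K^{Q_i}$ and $K^{Q_j}$; knowing the marginals does not determine the joint action. You eventually acknowledge this (``not merely a diagonal''), but your remedy covers only $q=2$ and even there is incomplete: if $O_p(K)$ yields $W=\Rep(m,2)$, Theorem~\ref{onedimensionaltheorem} concludes that $C$ is $\Rep(m,2)$, the length-$12$ Hadamard code, or its punctured code, and the hypothesis $C\neq\Rep(m,2)$ does not exclude the last two (one would still need to argue that these, being non-linear, cannot satisfy $T_C\leq X$). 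For $q>2$ you offer no argument for independence at all.

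The paper's proof avoids $K$ entirely and works inside $T_C\leq X_{i,j}$. You invoke Lemma~\ref{design} only for weight-$2$ vertices and then discard it; the paper applies it to weight-$\delta$ codewords: these form a $q$-ary $2$-design, so for every $(a,b)\in Q_i^\times\times Q_j^\times$ there is a codeword $\alpha$ with $(\alpha_i,\alpha_j)=(a,b)$, and the translation $t_\alpha\in T_C$ carries $(0,0)$ to $(a,b)$. The remaining case $(0,b)$ with $b\neq 0$ requires a codeword $\gamma$ with $\gamma_i=0$ and $\gamma_j\neq 0$; this is produced from $\delta<m$ (equivalently $C\neq\Rep(m,2)$, via \cite[Lemma~2.15]{ef2nt}) together with the $2$-homogeneity of $X_{\b 0}$ on $M$ and the already-obtained translations $h_{a,b}$. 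Your route through $O_p(K)$ could in principle be completed, but only by importing essentially this same $T_C$-argument to establish $T_C^{Q_i\times Q_j}=T_i\times T_j$; as written, the independence step is a genuine gap.
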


\begin{proof}
 By Lemma~\ref{design}, the weight $\delta$ codewords form a $q$-ary $2$-$(m,\delta,\lambda)$ design, for some $\lambda>0$. Thus, for all $a\in Q_i^\times$ and $b\in Q_j^\times$, there exists at least one weight $\delta$ codeword $\alpha$ such that $\alpha_i=a$ and $\alpha_j=b$. Hence, for all $a\in Q_i^\times$ and $b\in Q_j^\times$, there exists $h_{a,b}\in T_C$ such that $0^{(h_{a,b})_i}=a$ and $0^{(h_{a,b})_j}=b$. It follows that $T_C$ acts transitively on both of the sets $Q_i$ and $Q_j$, and, since $T_C\leq X_{i,j}$, so does $X_{i,j}$. To complete the proof we will show that the subgroup $S$ of $T_C$ fixing $0\in Q_i$ is transitive on $Q_j$. 
 
 Since $C\neq \Rep(m,2)$ when $q=2$, it follows from \cite[Lemma~2.15]{ef2nt} that $\delta< m$. Thus, there exist $\beta\in C$ and distinct $i',j'\in M$ such that $\beta_{i'}=0$ and $\beta_{j'}=c'$, for some $c'\in Q_{j'}^\times$. By Proposition~\ref{ihom}, $X_{\b 0}$ acts $2$-homogeneously on $M$, so there exists some $\gamma\in C$ such that either $\gamma_i=0$ and $\gamma_j=c$, for some $c\in Q_j^\times$, or $\gamma_i=c$ and $\gamma_j=0$, for some $c\in Q_i^\times$. If $\gamma$ has the second property, then $\gamma^{(h_{-c,c})}$ has the first property. Thus we may, without loss of generality, assume that $\gamma_i=0$ and $\gamma_j=c\neq 0$.  Finally, for any given $b\in Q_j^\times$ define $\gamma'=\gamma^{h_{c,-c}h_{-c,b}}$ and observe that $(\gamma')_i=0+c-c=0$ and $(\gamma')_j=c-c+b=b$. Then the translation $t_{\gamma'}$ lies in $S$ and maps $0$ to $b$ in $Q_j$. Thus $S$ is transitive on $Q_j$.
\end{proof}


Recall the definition of an $(X,2)$-neighbour-transitive extension from Definition~\ref{xntextensiondef}.

\begin{proposition}\label{orbitof0is2ntmodule}
 Let $C$ be an $X$-alphabet-affine and $(X,2)$-neighbour-transitive code in the Hamming graph $H(m,q)$ with $q=p^d$, $\delta\geq 5$ and $\b 0\in C$. Then $C$ is an $(X,2)$-neighbour-transitive extension of $W$, where $W$ is the code formed by the orbit of $\b 0$ under $O_p(K)$, with $K=X\cap B$. It follows that: 
 \begin{enumerate}
  \item $X_W=O_p(K)\rtimes X_{\b 0}$,
  \item $W$ is $X_W$-alphabet-affine, 
  \item $W$ is $(X_W,2)$-neighbour-transitive with minimum distance $\delta_W\geq 5$,
  \item $W$ is an $\F_p X_{\b 0}$-module, and,
  \item if $W\neq \Rep(m,2)$ then $q^2$ divides $|W|$. 
 \end{enumerate}
\end{proposition}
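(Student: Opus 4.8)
The plan is to build everything on top of Lemma~\ref{orbitofopisamodule} and Lemma~\ref{xijtransonqiqj}, together with the classification of $(X,2)$-neighbour-transitive extensions implicit in Definition~\ref{xntextensiondef}. First I would set $W = {\b 0}^{O_p(K)}$ and verify the extension hypotheses: by Lemma~\ref{orbitofopisamodule}, $O_p(K) = K \cap O_p(G)$ is a normal $p$-subgroup of $X$, and since $O_p(G) = \prod_{i\in M} T_i$ with $T_i \cong \Z_p^d$ acting regularly on $Q_i$ (Lemma~\ref{opGiszptodm}), the orbit $W$ is naturally an $\F_p$-subspace of $V\varGamma \cong \F_p^{dm}$; this is point (4). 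Since $\b 0 \in C$ and $O_p(K) \leq K \leq X$, we get $W \subseteq C$. To see that $C$ is an $(X,2)$-neighbour-transitive \emph{extension} of $W$ in the sense of Definition~\ref{xntextensiondef}, I need $T_W \leq X$ and $K = K_W$: the group of translations by elements of $W$ is exactly $O_p(K)$ (each element of $O_p(K)$ fixes no entry-label and, having the regular orbit $W$, acts as translation by the corresponding vector of $W$), so $T_W = O_p(K) \leq X$; and $K = O_p(K) \rtimes K_{\b 0} = T_W \rtimes K_{\b 0}$, with $K_{\b 0}$ fixing $\b 0$ hence stabilising $W = {\b 0}^{T_W}$ setwise, gives $K = K_W$.

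Next I would record the structural consequences. Since $O_p(K) \nsub X$ and $K = O_p(K) \rtimes K_{\b 0}$, the stabiliser of $W$ in $X$ is $X_W = O_p(K) \rtimes X_{\b 0}$: any $x \in X$ normalises $O_p(K)$, so $x$ stabilises $W = {\b 0}^{O_p(K)}$ iff $x \in X_{\b 0} O_p(K)$, which combined with $O_p(K) \leq X_W$ and $X_{\b 0} \leq X_W$ yields (1). For (2), note $X_W \cap B = O_p(K) = T_W$ plays the role of the new kernel, $W$ is $X_W$-invariant with $X_W$ acting transitively on entries (inherited from $X$, since $X_{\b 0}$ already acts on $M$ the same way), and the induced action $(X_W)_i^{(W)_i}$ on the $i$-th alphabet of $W$ contains $T_i$, hence is again $2$-transitive affine — so $W$ is $X_W$-alphabet-affine. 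For (3), $X_W$ acts transitively on $W$ (as $T_W$ already does, regularly), and transitivity on $W_1$ and $W_2$ follows because $X_{\b 0}$ is transitive on $\varGamma_1(\b 0)$ and $\varGamma_2(\b 0)$ by Proposition~\ref{ihom} (using $\delta_W \geq \delta \geq 5$), while the translations $T_W$ spread this transitivity across all of $W$ and its neighbour sets; that $\delta_W \geq 5$ holds since $W \subseteq C$ forces $\delta_W \geq \delta \geq 5$.

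Finally, for (5), suppose $W \neq \Rep(m,2)$; I want $q^2 \mid |W|$. Here $W$ is itself $X_W$-alphabet-affine and $(X_W,2)$-neighbour-transitive with $\delta_W \geq 5$, and $T_W \leq X_W$, so Lemma~\ref{xijtransonqiqj} applies to $W$: for any two entries $i \neq j$, the stabiliser $(X_W)_{i,j}$ — indeed $T_W$ itself, by the proof of that lemma — acts transitively on $Q_i \times Q_j$. In particular the projection of $W$ onto the pair of coordinates $(i,j)$ is all of $Q_i \times Q_j$, which has size $q^2$; since this projection is a quotient $\F_p$-space of $W$, its order $q^2$ divides $|W|$. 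The main obstacle I anticipate is not any single hard computation but rather the careful bookkeeping in the extension verification — pinning down that $T_W$ is precisely $O_p(K)$ and that the ``kernel'' and ``alphabet action'' of $X_W$ behave as required so that Definition~\ref{xntextensiondef}, Lemma~\ref{xijtransonqiqj}, and Proposition~\ref{ihom} can all legitimately be invoked for the pair $(W, X_W)$ rather than the original $(C,X)$.
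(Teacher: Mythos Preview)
Your strategy is the paper's: use Lemma~\ref{orbitofopisamodule} for normality of $O_p(K)$ and part~(4), identify $T_W$ with $O_p(K)$, verify $K=K_W$ to get the extension property, deduce (1) from $O_p(K)\nsub X$, and invoke Lemma~\ref{xijtransonqiqj} for (5). Your direct argument for (3) via Proposition~\ref{ihom} and $\delta_W\geq\delta$ substitutes for the paper's citation of \cite[Corollary~3.2]{ondimblock}, and your $\F_p$-quotient argument for (5) is a clean variant of the paper's fibre-counting; both are fine.

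There are, however, two slips in your handling of (2). First, $X_W\cap B\neq O_p(K)$ in general: you have already shown $K=K_W\leq X_W$, so in fact $X_W\cap B=K=O_p(K)\rtimes K_{\b 0}$. This is harmless for the alphabet-affine conclusion (which only needs the kernel nontrivial), but it matters for what you can infer about the induced action on $Q_i$, since there is no reason $O_p(K)^{Q_i}$ must be all of $T_i$. Second, and more substantively, ``contains $T_i$, hence is again $2$-transitive affine'' is not a valid inference when $q>2$: the regular normal subgroup $T_i$ alone gives transitivity on $Q_i$, not $2$-transitivity. The paper supplies the missing ingredient by observing that $X_{{\b 0},i}\leq (X_W)_i$ acts transitively on $Q_i^\times$ (this follows from Proposition~\ref{ihom}, since $X_{\b 0}$ is transitive on $\varGamma_1({\b 0})$ and the weight-$1$ vertices with support $\{i\}$ form a block for that action); combining this point-stabiliser transitivity with $(X_W)_i^{Q_i}\leq X_i^{Q_i}\leq\AGL_d(p)$ then yields the required affine $2$-transitivity.
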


\begin{proof} 
 By Lemma~\ref{orbitofopisamodule}, $W={\b 0}^{O_p(K)}$ is an $\F_p X_{\b 0}$-module, $O_p(K)\nsub X$, and $K=O_p(K)\rtimes K_{\b 0}$. In particular, part 4 is proved. By Lemma~\ref{opGiszptodm}, $O_p(G)\cong\Z_p^{dm}$ acts regularly on the vertex set of $H(m,q)$, from which it follows that $T_W=O_p(K)\nsub X$. Now, $T_W\nsub X$ implies, by \cite[Lemma~3.1]{ondimblock}, that $W$ is a block of imprimitivity for the action of $X$ on $C$. By assumption $W$ contains $\b 0$, which implies that $X_{\b 0}$, and thus $K_{\b 0}$, fixes $W$. Thus $K=K_W$, and so $C$ is an $(X,2)$-neighbour-transitive extension of $W$. As $T_W$ is transitive on $W$, we have $X_W=T_W\rtimes X_{\b 0}$, proving part 1. By \cite[Corollary~3.2]{ondimblock}, $W$ is $(X_W,2)$-neighbour-transitive and $\delta_W\geq 5$, which gives part 3. Now, $T_W$, and hence $K$, is non-trivial, $X_{W,i}^{Q_i}\leq X_{i}^{Q_i}\leq\AGL_d(p)$ and, by Proposition~\ref{ihom}, $X_{\b 0}$ acts transitively on $M$. Since, by part 1, $X_{\b 0}=X_W\cap X_{\b 0}$, and since $X_{{\b 0},i}$ is transitive on $Q_i^\times$, it follows that $X_{W,i}^{Q_i}$ is affine $2$-transitive, and so, by Definition~\ref{efaasaadef}, $W$ is $X_W$-alphabet-affine, establishing part 2.
 
 Suppose $W\neq\Rep(m,2)$ and fix $i,j\in M$ with $i\neq j$. Suppose there are $k$ codewords $\alpha\in W$ such that $\alpha_i=\alpha_j=0$. Lemma~\ref{xijtransonqiqj} implies that $X_{W,i,j}$ acts transitively on $Q_i\times Q_j$. Thus, for any $a\in Q_i$ and $b\in Q_j$ there are the same number $k$ of codewords $\beta\in W$ such that $\beta_i=a$ and $\beta_j=b$. Therefore, $|W|=kq^2$, proving part 5.
\end{proof}

Proposition~\ref{orbitof0is2ntmodule} states that every code that is both $X$-alphabet-affine and $(X,2)$-neighbour-transitive with $\delta\geq 5$ is an $(X,2)$-neighbour-transitive extension of some $\F_p$-subspace $W$ of the vertex set $V\cong\F_p^{dm}$ of $H(m,q)$, where $q=p^d$. Since $W$ must also be $(X_W,2)$-neighbour-transitive, classifying $(X,2)$-neighbour-transitive codes requires knowledge of all subspaces of $V$ which form $2$-neighbour-transitive codes, and all $(X,2)$-neighbour-transitive extensions of them. In fact, given a subgroup $X_{\b 0}$ of $\Aut(\varGamma)_{\b 0}$ such that $X_{\b 0}$ acts transitively on $\varGamma_2(\b 0)$, any $\F_p X_{\b 0}$-submodule $W$, with minimum distance $\delta_W\geq 5$, of $V$ will be $(X_W,2)$-neighbour-transitive, with $X_W=T_W\rtimes X_{\b 0}$. This suggests finding all such groups $X_{\b 0}$ and considering $\F_p X_{\b 0}$-submodules of $V$. The next section investigates this problem for the case $q=2$. 

%
%

\section{Binary linear codes}\label{binarycasesec}

When $q=2$ a code is called \emph{binary} and, for any $X\leq\Aut(\varGamma)$ the groups $\Xpi$ and $K_{\b 0}$ are trivial. This section considers binary codes satisfying the following definition. Since $K_{\b 0}=1$, it follows from the results of the previous section that these codes are the building blocks of binary $(X,2)$-neighbour-transitive codes. Note that the condition $\Diag_m(\F_q^\times)\leq K_{\b 0}$ is trivially satisfied when $q=2$.

\begin{definition}
 A code $C$ in $H(m,q)$ is \emph{linear-$(X,2)$-neighbour-transitive} if $\Diag_m(\F_q^\times)\leq K_{\b 0}$, $T_C\leq X$ and $C$ is $(X,2)$-neighbour-transitive.
\end{definition}

The next lemma shows that a binary linear-$(X,2)$-neighbour-transitive code with $\delta\geq 5$ is a submodule of the $\F_2$-permutation module of a $2$-homogeneous group in its $2$-homogeneous action.

\begin{lemma}\label{binarycodepermsubmodules}
 Let $C$ be an $(X,2)$-neighbour-transitive code with $T_C\leq X$ and minimum distance $\delta\geq 5$ in the Hamming graph $H(m,2)$ with vertex set $V\cong\F_2^m$. Then $C$ is a submodule of $V$, regarded as the permutation module for the $2$-homogeneous action of $X^M\cong X_{\b 0}$ on $M$.
\end{lemma}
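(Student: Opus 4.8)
The plan is to read off the module structure from the semidirect decomposition $\Aut(H(m,2))=B\rtimes L$ with $B\cong\Z_2^m$ the base group and $L\cong\Sym(M)$ the top group, and to obtain $2$-homogeneity directly from Proposition~\ref{ihom}. I take $\b 0\in C$ (the standing convention whenever module structure is discussed; if one prefers not to assume it, note that $T_C\le X\le\Aut(C)$ contains the identity translation and so forces it). The content then splits into three points: (i) $C$ is $\F_2$-linear; (ii) $X_{\b 0}$ acts on $V$ as the permutation module of $X^M$ and $X^M\cong X_{\b 0}$; (iii) $X^M$ acts $2$-homogeneously on $M$.

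For (i): since $T_C\le X\le\Aut(C)$, each translation $t_c$ with $c\in C$ preserves $C$ setwise, so for $c,c'\in C$ we get $c+c'=t_c(c')\in C$; thus $C$ is closed under addition and, as $\b 0\in C$, is an $\F_2$-subspace of $V$, and $T_C=\{t_c\mid c\in C\}$ is a subgroup of $B$ acting regularly on $C$. For (ii): because $q=2$, a base-group element fixing $\b 0$ is trivial, so the stabiliser of $\b 0$ in $\Aut(H(m,2))$ is the top group $L$; hence $X_{\b 0}\le L$ acts on $V\cong\F_2^m$ by permuting coordinates — which is precisely the permutation $\F_2$-module structure for $X_{\b 0}^M\le\Sym(M)$ — and faithfully on $M$, so $X_{\b 0}\cong X_{\b 0}^M$. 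Since $X$ and its subgroup $T_C$ are both transitive on $C=C_0$, the orbit--stabiliser argument gives $X=T_C X_{\b 0}$; as $T_C\le B$ fixes $M$ pointwise, $X^M=X_{\b 0}^M\cong X_{\b 0}$. Therefore $C$, being an $\F_2$-subspace invariant under $X_{\b 0}\le\Aut(C)$, is an $\F_2 X_{\b 0}$-submodule of the permutation module $V$ for the group $X^M$ acting on $M$.

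For (iii): apply Proposition~\ref{ihom} with $\alpha=\b 0\in C$ and $s=2$; since $\delta\ge 5$ we have $2\le\min\{s,\lfloor(\delta-1)/2\rfloor\}$, so $X_{\b 0}$ acts $2$-homogeneously on $M$, and hence so does $X^M$. There is no serious obstacle: the lemma is essentially the observation that the hypotheses force $C$ to be linear, after which the permutation-module description is dictated by the structure of $\Aut(H(m,2))$ and $2$-homogeneity is immediate. The only steps needing a little care are the derivation of linearity from $T_C\le X\le\Aut(C)$ and the factorisation $X=T_CX_{\b 0}$ that yields $X^M\cong X_{\b 0}$.
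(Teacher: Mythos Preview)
Your proof is correct and follows essentially the same approach as the paper's: both arguments observe that, since $q=2$, any element of $X_{\b 0}$ has trivial base-group part and hence $X_{\b 0}\leq L$ acts by pure coordinate permutations; both then deduce $X^M\cong X_{\b 0}$ from the transitivity of $T_C$ on $C$ (the paper phrases this as $K=T_C$ and $X=T_C\rtimes X_{\b 0}$), and both invoke Proposition~\ref{ihom} for $2$-homogeneity. Your explicit derivation that $T_C\leq\Aut(C)$ forces $\b 0\in C$ and $C$ to be $\F_2$-linear is a detail the paper leaves implicit.
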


\begin{proof}
 If $x=h\sigma\in X_{\b 0}$, with $h\in B$ and $\sigma\in L$, then $h=1$, since $h_i$ fixes $0$ for each $i$, so that $x=\sigma\in L$. Thus $X_{\b 0}\cong X_{\b 0}^M\cong X^M$ acts as pure permutations on entries, so that $V$ may be regarded as the permutation module for the action of $X_{\b 0}$ on $M$. Since $\delta\geq 5$, Proposition~\ref{ihom} implies that this action is $2$-homogeneous. Since $X_{\b 0}$ acts faithfully on $M$ we have that $K_{\b 0}=1$. Thus $K=T_C$ (since $T_C\leq K$ and $K_{\b 0}=1$) and hence $X=T_C\rtimes X_{\b 0}$. Thus, $C$ is a submodule of the $2$-homogeneous $\F_2 X_{\b 0}$-permutation module $V$. 
\end{proof}

The next result, Lemma~\ref{moduleiscode}, is a kind of converse to Lemma~\ref{binarycodepermsubmodules}. Note that if $C$ has minimum distance $\delta=3$ and $C$ is perfect, then $C$ has covering radius $\rho=1$, and is thus \emph{not} $2$-neighbour-transitive, since $C_2$ is empty. 

\begin{lemma}\label{moduleiscode}
 Let $G$ act $2$-homogeneously on a set $M$ of size $m\geq 5$, let $V\cong\F_2^m$ be the permutation module for the action of $G$ on $M$, let $Y$ be the submodule of $V$ consisting of the set of all constant functions, and let $C$ be an $\F_2 G$-submodule of $V$. Then $C$ is a code in $H(m,2)$ with minimum distance $\delta$, $X=T_C\rtimes G\leq \Aut(C)$, and precisely one of the following statements holds:
 \begin{enumerate}
  \item $C=\{\b 0\}$ with $\delta$ undefined.
  \item $\delta=1$ and $C=V$.
  \item $\delta=m$ and $C=Y$ is linear-$(X,2)$-neighbour-transitive.
  \item $\delta=2$ and $C=Y^\perp$, the dual of $Y$ under the standard inner product, and $C$ is $X$-neighbour-transitive.
  \item $\delta=3$, $C$ is a perfect code in $H(m,2)$, and $C$ is $X$-neighbour-transitive.
  \item $4\leq \delta< m$ and $C$ is linear-$(X,2)$-neighbour-transitive.
 \end{enumerate}
\end{lemma}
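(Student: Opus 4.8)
The plan is to analyze an arbitrary $\F_2 G$-submodule $C \le V$ according to its relationship with the canonical submodules $Y$ (constants) and $Y^\perp$ (even-weight vectors), using that the $2$-homogeneous action forces strong constraints on minimum weight. First I would dispose of the degenerate cases: if $C = \{\b 0\}$ we are in case 1; if $C \ni v$ with $\wt(v) = 1$, then since $G$ is transitive on $M$ the $G$-orbit of $v$ spans all weight-$1$ vectors, so $C = V$ and $\delta = 1$, giving case 2. So assume $\delta \ge 2$. The minimum-weight codewords of $C$ form a $G$-invariant set, and by $2$-homogeneity $G$ is transitive on the $2$-subsets of $M$; I would use this to pin down the small-$\delta$ cases. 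If $\delta = 2$, the weight-$2$ codewords form a single $G$-orbit and, together, generate a submodule; one checks (via the $2$-homogeneity, which makes the "edge graph" on $M$ connected) that this submodule is exactly $Y^\perp$, the even-weight subcode, and then that $C = Y^\perp$ itself since any additional codeword would have odd weight and, combined with $Y^\perp$, force $\delta = 1$. This yields case 4; the $X$-neighbour-transitivity follows because $Y^\perp$ is $(X,1)$-neighbour-transitive by the general machinery (Lemma~\ref{binarycodepermsubmodules} shows $X = T_C \rtimes G \le \Aut(C)$, and $G$ $2$-homogeneous on $M$ gives transitivity on $\varGamma_1(\b 0) = $ weight-$1$ vectors, hence on $C_1$).

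Next, if $\delta = m$, then $C$ contains the all-ones vector $\mathbf{1}$; since $C$ is a submodule and the $G$-orbit of $\mathbf 1$ is just $\{\mathbf 1\}$, any codeword of weight strictly between $0$ and $m$ would again (by transitivity on $M$) generate weight-$1$ vectors and collapse $\delta$. Hence $C = \langle \mathbf 1 \rangle = Y$, and $Y$ is linear-$(X,2)$-neighbour-transitive: this is exactly the repetition code, whose $2$-neighbour-transitivity under $X = T_C \rtimes G$ with $G$ $2$-homogeneous is standard (or cite the relevant earlier result). This is case 3. For $\delta = 3$: a code with minimum distance $3$ that is $(X,1)$-neighbour-transitive — again $X$-neighbour-transitivity holds by the same argument as above — and I must argue it is perfect. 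Here I would invoke the remark preceding the lemma in reverse: $C$ being a $2$-homogeneous-invariant module with $\delta = 3$, and the structure of such permutation-module submodules (this is where the classification of submodules of $2$-transitive permutation modules, referenced in the introduction, is implicitly in play), forces $C$ to be perfect — alternatively, one notes that $2$-neighbour-transitivity would fail unless $\rho = 1$, and since the lemma only claims $C$ is a perfect code and $X$-neighbour-transitive (not $2$-neighbour-transitive), the point is simply that $\delta = 3$ together with the module/transitivity structure leaves perfectness as the only option; I would cite or adapt the known characterization of neighbour-transitive codes with $\delta = 3$. This is case 5.

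Finally, in the remaining range $4 \le \delta < m$ (note $\delta = m$ was case 3, and $\delta > m$ is impossible), I must show $C$ is linear-$(X,2)$-neighbour-transitive. Since $T_C \le X$ and $\Diag_m(\F_2^\times) = 1 \le K_{\b 0}$ trivially, by the definition it suffices to verify $C$ is $(X,2)$-neighbour-transitive. We have $X = T_C \rtimes G$ with $G = X_{\b 0}$ acting $2$-homogeneously on $M$ (so on entries as pure permutations, by Lemma~\ref{binarycodepermsubmodules}'s argument). Transitivity on $C = C_0$ is clear from $T_C$. Since $\delta \ge 4 > 2$, every vertex in $C_1$ is at distance $1$ from a unique codeword and $C_1 = \bigcup_{\alpha \in C} \varGamma_1(\alpha)$; transitivity of $G = X_{\b 0}$ on $\varGamma_1(\b 0)$ (the weight-$1$ vectors, immediate from transitivity on $M$) combined with transitivity of $X$ on $C$ gives transitivity on $C_1$. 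Since $\delta \ge 5$, likewise $C_2 = \bigcup_{\alpha \in C}\varGamma_2(\alpha)$ is a disjoint union, each element of $C_2$ at distance $2$ from a unique codeword, and $2$-homogeneity of $G$ on $M$ gives transitivity on $\varGamma_2(\b 0)$ (the weight-$2$ vectors), so $X$ is transitive on $C_2$. Hence $C$ is $(X,2)$-neighbour-transitive, which is case 6. The main obstacle I anticipate is case 5: correctly delimiting what must be proved (perfectness versus mere neighbour-transitivity) and invoking the right structural input — one must be careful that a $\delta = 3$ module need not itself be $2$-neighbour-transitive, so the statement is deliberately weaker there, and the argument that it is nonetheless perfect should lean on the submodule classification for $2$-transitive permutation modules rather than on a direct combinatorial count. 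I would also double-check the boundary $m = 5,6$ small cases by hand to make sure the orbit-spanning arguments in cases 2–4 do not have low-dimension exceptions.
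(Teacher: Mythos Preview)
Your overall structure matches the paper's: case-split on $\delta$ and use $2$-homogeneity to control where $\varGamma_1({\b 0})$ and $\varGamma_2({\b 0})$ land in the distance partition. Cases 1--4 and 6 are essentially right (apart from a slip in case~6, where you write ``since $\delta\ge 5$'' but the hypothesis is only $\delta\ge 4$; disjointness of $\bigcup_\alpha \varGamma_2(\alpha)$ is unnecessary --- all you need is $\varGamma_2({\b 0})\subseteq C_2$, which follows from $\delta\ge 4$ by the triangle inequality, and then transitivity of $T_C$ on $C$ together with transitivity of $X_{\b 0}$ on $\varGamma_2({\b 0})$ gives transitivity on $C_2$).

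The genuine gap is case~5. You correctly flag it as the obstacle, but then propose the wrong fix: you say the perfectness argument ``should lean on the submodule classification for $2$-transitive permutation modules rather than on a direct combinatorial count.'' The paper does exactly the opposite, with a three-line direct argument requiring no classification at all. Since $\delta=3$, there is a codeword $\alpha$ with $\supp(\alpha)=\{i,j,k\}$. The weight-$2$ vertex $\nu$ with $\supp(\nu)=\{i,j\}$ satisfies $d(\alpha,\nu)=1$, so $\nu\in C_1$. By transitivity of $X_{\b 0}=G$ on $\varGamma_2({\b 0})$, \emph{every} weight-$2$ vertex lies in $C_1$, so $\varGamma_2({\b 0})\cap C_2=\emptyset$; translating by $T_C$, $\varGamma_2(\beta)\cap C_2=\emptyset$ for every $\beta\in C$, whence $C_2=\emptyset$ and $C$ has covering radius $1$, i.e.\ is perfect. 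The same transitivity principle that drives the other cases settles this one; invoking the module classification here would be circular in spirit, since Theorem~\ref{binarygroups} uses this lemma, not the other way around.
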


\begin{proof}
 First, the permutation module $V$ may be regarded as the vertex set of $H(m,2)$ in the natural way, so that $C$ is a code in $H(m,2)$. Also $X=T_C\rtimes G\leq \Aut(C)$, so that $X_{\b 0}=G$. Part 1 holds if and only if $|C|=1$. Assume $|C|\geq 2$ and let $\delta$ be the minimum distance of $C$. Since $T_C$ acts transitively on $C$, and ${\b 0}\in C$, there exists a weight $\delta$ codeword in $C$. Note that the weight $1$ vertices are the characteristic vectors of the subsets of $M$ of size $1$, and the weight $2$ vertices are the characteristic vectors of the subsets of $M$ of size $2$. Since $G$ acts $2$-homogeneously on $M$, it follows that $X_{\b 0}$ acts transitively on each of the sets $\varGamma_s({\b 0})$, for $s=1,2$. Thus, $\varGamma_1({\b 0})$ is a subset of either $C$ or $C_1$, since any weight $1$ vertex is distance $1$ from ${\b 0}\in C$, and $\varGamma_2({\b 0})$ is a subset of either $C$, $C_1$ or $C_2$, since any weight $2$ vertex is distance $2$ from ${\b 0}\in C$. If $\varGamma_2({\b 0})\subseteq C_2$ it immediately follows that $C$ is $(X,2)$-neighbour-transitive, as $T_C$ acts transitively on $C$, and $X_{\b 0}$ acts transitively on $\varGamma_1({\b 0})$ and $\varGamma_2({\b 0})$.
 
 Suppose $\delta=1$. Then there exists some $\alpha,\beta\in C$ such that $d(\alpha,\beta)=1$. Since $T_C$ acts transitively on $C$, it can be assumed that $\beta={\b 0}$. It then follows that $\alpha$ is in $\varGamma_1({\b 0})\cap C$, so that $\varGamma_1({\b 0})\subseteq C$. Thus, every weight $1$ vertex $\nu$ is in $C$, and the translation $t_\nu$ by $\nu$ lies in $X$. Hence $C=V$, as in part 2.
 
 Suppose $\delta=m$. If $\alpha\in V$ with $d({\b 0},\alpha)=m$ it follows that $\alpha_i=1$ for all $i\in M$. As ${\b 0}\in C$, we have part 3, that is $C=Y$. Since $m\geq 5$, we deduce $\varGamma_2({\b 0})\subseteq C_2$, so that $C$ is $(X,2)$-neighbour-transitive, by the argument in the first paragraph of the proof.
 
 Suppose $\delta=2$. Then $\varGamma_1({\b 0})\subseteq C_1$. However, there exists a vertex $\alpha\in \varGamma_2({\b 0})\cap C$, so that $\varGamma_2({\b 0})\subseteq C$ and every weight $2$ codeword is in $C$. Thus $C=Y^\perp$, and part 4 holds.
 
 Suppose $\delta=3$. Again, $\varGamma_1({\b 0})\subseteq C_1$. Now, there exists a weight $3$ vertex $\alpha\in C$ and distinct $i,j,k\in M$ such that $\supp(\alpha)=\{i,j,k\}$. Let $\nu\in\varGamma_2({\b 0})$ such that $\nu_i=\nu_j=1$. Then $d(\alpha,\nu)=1$, so that $\nu\in C_1$. Hence, $\varGamma_2({\b 0})\subseteq C_1$ and so $\varGamma_2({\b 0})\cap C_2=\emptyset$. As $T_C$ acts transitively on $C$, we have $\varGamma_2(\beta)\cap C_2=\emptyset$ for any $\beta\in C$. Hence $C_2=\emptyset$ and $C$ is perfect, giving part 5.
 
 Finally, suppose $4\leq \delta< m$. Since $\delta\geq 4$, for $i=1,2$, every weight $i$ vector is at least distance $i$ from any non-zero codeword. Thus, $\varGamma_1({\b 0})\subseteq C_1$ and $\varGamma_2({\b 0})\subseteq C_2$ so that $C$ is a linear-$(X,2)$-neighbour-transitive code, completing the proof.
\end{proof}

A lower bound for the minimum distance of the dual of a linear code generated by the blocks of certain designs is given in \cite[Lemma~2.4.2]{assmus1994designs}. This result is applied below, and will be used later to provide information about the minimum distance of some binary linear-$(X,2)$-neighbour-transitive codes of interest. 

\begin{lemma}\label{mindesigncodedist}
 Let $C$ be binary linear code in $H(m,2)$ with minimum distance $\delta$ satisfying $3\leq\delta<m$ such that the weight $\delta$ codewords of $C$ form a $2$-$(m,\delta,\lambda)$ design $\D$, and let $C^\perp$ be the dual code of $C$ with minimum distance $\delta^\perp$. (In particular, this is satisfied if there exists $X\leq\Aut(C)$ such that $X_{\b 0}$ acts $2$-homogeneously on $M$.) Then $m-1\leq (\delta-1)(\delta^\perp-1)$. Furthermore, if $C$ is self-orthogonal, that is $C\subseteq C^\perp$, then $\delta\geq \sqrt{m-1}+1$.
\end{lemma}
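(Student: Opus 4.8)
The plan is to reprove, in the binary setting at hand, the bound behind \cite[Lemma~2.4.2]{assmus1994designs} by a direct double-counting argument on the design $\D$, and then to read off the self-orthogonal consequence in one line; alternatively one could simply quote that lemma for the first inequality.

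First I would note that $\delta^\perp$ is meaningful: since $\delta\geq 3$ we have $C\neq\F_2^m$, hence $\dim C<m$ and $C^\perp\neq 0$. Choosing $c\in C^\perp$ with $\wt(c)=\delta^\perp$ and setting $S=\supp(c)$, the key observation will be a parity condition: for every block $B\in\D$ the characteristic vector $\chi_B$ is a weight-$\delta$ codeword of $C$ (by hypothesis $\D$ consists of such codewords), so $\langle c,\chi_B\rangle=0$ over $\F_2$, i.e.\ $|B\cap S|$ is even. In particular every block through a point of $S$ meets $S$ in at least two points.

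The core step is then to fix $x\in S$ and count the pairs $(B,y)$ with $B\in\D$, $x\in B$, and $y\in B\cap(S\setminus\{x\})$, in two ways. Summing over $y\in S\setminus\{x\}$ and using that $\D$ is a $2$-design produces exactly $(\delta^\perp-1)\lambda$ pairs; summing over the $r$ blocks through $x$ and using the parity observation produces at least $r$ (each such block contributes at least one $y$). Hence $(\delta^\perp-1)\lambda\geq r$. Since $C$ is linear and $\delta$ is a realised distance, $\D$ is nonempty, so $\lambda\geq 1$; combining with the design identity $r(\delta-1)=\lambda(m-1)$ and cancelling $\lambda$ gives $m-1\leq(\delta-1)(\delta^\perp-1)$. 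For the final assertion, $C\subseteq C^\perp$ forces $\delta^\perp\leq\delta$, whence $m-1\leq(\delta-1)^2$ and therefore $\delta\geq\sqrt{m-1}+1$.

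I do not expect a genuine obstacle here: this is the classical design-code counting, and the only points requiring care are that $C^\perp\neq 0$ (guaranteed by $\delta\geq 3$) and that $\lambda\neq 0$ so the cancellation is legitimate. As for the parenthetical claim, if some $X\leq\Aut(C)$ has $X_{\b 0}$ acting $2$-homogeneously on $M$, then over $\F_2$ the group $X_{\b 0}$ acts on $M$ by pure permutations and fixes $\b 0$, so it permutes the weight-$\delta$ codewords among themselves; $2$-homogeneity then forces every $2$-subset of $M$ to lie in the same number of their supports, making those supports a $2$-$(m,\delta,\lambda)$ design (compare Lemma~\ref{design}).
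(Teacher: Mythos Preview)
Your argument is correct and is essentially the paper's approach with \cite[Lemma~2.4.2]{assmus1994designs} unpacked: the paper passes to the subcode $C_2(\D)$ spanned by the blocks and quotes that lemma to obtain $\delta^\perp\geq (r+\lambda)/\lambda$, which is exactly your inequality $(\delta^\perp-1)\lambda\geq r$, and you reprove this directly via the parity observation and double count. The self-orthogonal conclusion and the justification of the parenthetical $2$-homogeneous claim match the paper's treatment as well.
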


\begin{proof}
 First, as $3\leq\delta<m$, neither $C$ nor $C^\perp$ is either the repetition code, or its dual. If $X_{\b 0}$ acts $2$-homogeneously on $M$, then each weight $2$ vector, being the characteristic vector of a $2$-subset of $M$, is covered by the same number $\lambda$ of weight $\delta$ elements of $C$. Thus, the set $\varGamma_\delta({\b 0})\cap C$ of all weight $\delta$ codewords of $C$ forms a $2$-$(m,\delta,\lambda)$ design $\D$ for some integer $\lambda$, justifying the statement in parentheses. 
 
 Let $C_2(\D)$ be the code spanned by the blocks of $\D$, considered as characteristic vectors. Now, $C_2(\D)$ is fixed setwise by $X_{\b 0}$ and $C_2(\D)\neq\Rep(m,2)$, since $\D$ contains at least $1$ weight $\delta$ vertex, and $\delta<m$. As $C_2(\D)$ is contained in $C$, it follows that the minimum distance of $C_2(\D)$ is also $\delta$, and $C^\perp$ is contained in $(C_2(\D))^\perp$. Thus, $\delta^\perp$ is bounded below by the minimum distance of $(C_2(\D))^\perp$. Hence, by \cite[Lemma~2.4.2]{assmus1994designs}, $\delta^\perp\geq (r+\lambda)/\lambda$, where $r=(m-1)\lambda/(\delta-1)$. Thus, $\delta^\perp\geq (m-1)/(\delta-1)+1$, that is, $m-1\leq (\delta-1)(\delta^\perp-1)$ as required. Suppose $C\subseteq C^\perp$. Then $\delta^\perp\leq \delta$ so that $m-1\leq (\delta-1)^2$, that is, $\delta\geq \sqrt{m-1}+1$.
\end{proof}

Mortimer \cite{mortimer1980modular} investigated the permutation modules of $2$-transitive groups, in particular studying when proper submodules exist, other than those corresponding to the binary repetition code and its dual. In the case of permutation modules over $\F_2$, a more thorough account is given in \cite{Ivanov1993}, completing most of the relevant cases left unanswered from \cite{mortimer1980modular}. The results from \cite{Ivanov1993,mortimer1980modular} are crucial for the proof of our next result.

\begin{theorem}\label{binarygroups}
 Let ${\mathcal C}$ be a binary linear-$(X,2)$-neighbour-transitive code in $H(m,2)$ with minimum distance  at least $5$, and ${\mathcal C}\neq\Rep(m,2)$. Then ${\mathcal C}$ contains a binary linear-$(X',2)$-neighbour-transitive code $C$ of dimension $k$ with minimum distance $\delta\geq 5$, where $X'=T_{C}\rtimes X_{\b 0}$, such that the values of $X_{\b 0}$, $m$, $\delta$ and $k$ satisfy one of the lines of Table~\ref{binarytable}.
 
 Also, for $X_{\b 0}$, $m$ and $k$ as in each of the lines of Table~\ref{binarytable}, there exists a binary code $C$ in $H(m,2)$ of $\F_2$-dimension $k$, such that $C$ is linear-$(T_C\rtimes X_{\b 0},2)$-neighbour-transitive, for some $\delta$ satisfying the condition in that line of the table.
\end{theorem}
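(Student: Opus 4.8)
The plan is to reduce the classification to the Mortimer--Ivanov description of submodules of $\F_2$-permutation modules for $2$-transitive groups, and then carry out a case-by-case analysis over the finite $2$-transitive groups. By Lemma~\ref{binarycodepermsubmodules}, the code $\mathcal C$ is a submodule of the permutation module $V\cong\F_2^m$ for the $2$-homogeneous action of $X_{\b 0}$ on $M$; since $\delta\geq 5$ and $\mathcal C\neq\Rep(m,2)$, the module $\mathcal C$ is a proper, non-trivial submodule other than $Y$ (the repetition subcode). Conversely, Lemma~\ref{moduleiscode} tells us that \emph{any} $\F_2 X_{\b 0}$-submodule $C\leq V$ with minimum distance $\delta$ satisfying $5\leq\delta<m$ is automatically linear-$(T_C\rtimes X_{\b 0},2)$-neighbour-transitive; so the task is precisely to enumerate, for each relevant $2$-homogeneous group $X_{\b 0}$, a ``minimal'' such submodule $C$ and read off $m$, $k=\dim C$, and a lower bound for $\delta$. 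The first step is therefore to split according to whether the $2$-homogeneous action of $X_{\b 0}$ is $2$-transitive or merely $2$-homogeneous-not-$2$-transitive; the latter sporadic possibilities (affine groups of odd prime-power degree $r\equiv 3\pmod 8$, and $\PSL_2(r)$ with $r\equiv\pm1\pmod 8$ not $3$-transitive) give lines 1 and 5, where the minimum-distance bound $\delta\geq\sqrt{r-1}+1$ (resp.\ $\sqrt r+1$) comes from Lemma~\ref{mindesigncodedist} once one checks $C$ is self-orthogonal.

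For the $2$-transitive case I would invoke the classification of finite $2$-transitive groups and walk through the families: affine groups (giving line 2, and the $\PSL_t(2^k)$, $\alt_7$, $\Sp_{2t}(2)$ subcases of lines 3, 4, 6, 7 via their affine $2$-transitive actions), and almost-simple groups with socle $\PSL_t(q)$, $\PSU_3(r)$, $\Sz(r)$, $\Ree(r)$, $\PSp_{2t}(2)$, the alternating and symmetric groups, and the various sporadic examples ($\mg_{22},\mg_{23},\mg_{24},\HS,\Co_3$, etc.). For each, the results of \cite{mortimer1980modular,Ivanov1993} either say there is no proper submodule beyond $Y,Y^\perp$ (so that group does not occur, since $Y^\perp$ has $\delta=2$ and $Y=\Rep(m,2)$), or they identify the submodule lattice explicitly. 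When a suitable submodule exists, I would take $C$ to be a minimal submodule not equal to $Y$; its dimension $k$ is read from the module structure (e.g.\ $\frac{r-1}{2}$ or $\frac{r+1}{2}$ for the natural ``hull''-type modules, $t^k$ for $\PSL_t(2^k)$, $r^2-r+1$ or $r^3-r^2+r$ for the unitary/Ree cases, and the explicit small dimensions for the sporadic lines). The minimum distance is then either computed directly (lines 2, 4, 6, 7, 11--13, 15, where $\delta$ is an exact known value), or bounded below: Lemma~\ref{mindesigncodedist} gives $\delta\geq\sqrt{m-1}+1$ whenever the weight-$\delta$ words form a $2$-design and $C$ is self-orthogonal, which applies to lines 1, 5, 8, 10, and 14, while the bounds in lines 3 and 9 follow from elementary design-counting ($b\geq$ one block forces $\delta$ large, or a known code-theoretic inequality). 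One must also verify in every surviving line that the chosen $\delta$ genuinely satisfies $\delta\geq 5$; the conditions in the ``conditions'' column (e.g.\ $r\geq 23$, $t\geq 3$, $t\geq 4$) are exactly what is needed to exclude the small cases where $\delta\leq 4$ or where the module collapses to $Y$ or $Y^\perp$ or to an $X$-entry-faithful/alphabet-almost-simple code already classified elsewhere.

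For the converse direction (second paragraph of the statement), for each line I would exhibit the code $C$ concretely --- as the appropriate submodule of the permutation module, or equivalently as the code generated by the blocks of the relevant $2$-design (projective/affine geometry designs, the $\Sp_{2t}(2)$ quadratic-form designs, unitals and Ree unitals, Witt designs, the Higman--Sims and McLaughlin designs) --- check that its minimum distance lies strictly between $5$ and $m$ (using the same exact values or lower bounds as above, together with $\delta<m$ since $C\neq\Rep(m,2)$ by \cite[Lemma~2.15]{ef2nt}), and then apply Lemma~\ref{moduleiscode} to conclude linear-$(T_C\rtimes X_{\b 0},2)$-neighbour-transitivity.

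The main obstacle will be the almost-simple classical cases --- particularly $\soc(X_{\b 0})\in\{\PSU_3(r),\Ree(r),\PSp_{2t}(2)\}$ --- where the submodule lattice from \cite{Ivanov1993,mortimer1980modular} is more intricate, the relevant submodule need not be uniquely determined, and the minimum distance is genuinely not known in closed form: here one must carefully pin down which submodule to call ``minimal'', establish the self-orthogonality condition needed to apply Lemma~\ref{mindesigncodedist} (this is where the $r\equiv 1$ versus $r\equiv 3\pmod 4$ dichotomy of Remark~\ref{psuremark} enters, producing the two distinct lines 8 and 9 in the unitary case), and separate the genuinely new examples from those already covered by the entry-faithful \cite{ef2nt} or alphabet-almost-simple \cite{aas2nt} classifications so as to avoid double-counting. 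A secondary but nontrivial bookkeeping issue is ensuring the list is exhaustive: one must confirm that every finite $2$-homogeneous group whose $\F_2$-permutation module has a proper submodule other than $Y$ and $Y^\perp$ appears somewhere in Table~\ref{binarytable}, which requires a complete pass through the classification with the Mortimer--Ivanov results applied to each family.
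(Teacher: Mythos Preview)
Your overall strategy coincides with the paper's: reduce via Lemmas~\ref{binarycodepermsubmodules} and~\ref{moduleiscode} to $\F_2 X_{\b 0}$-submodules of the permutation module, split into the $2$-transitive and $2$-homogeneous-not-$2$-transitive cases, and run through the Mortimer--Ivanov classification case by case, invoking Lemma~\ref{mindesigncodedist} for minimum-distance bounds. The paper's only refinement is to work with \emph{preminimal} submodules (those $U\supseteq Y$ with $U/Y$ minimal in $V/Y$), which is essentially your ``minimal submodule not equal to $Y$''.

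That said, several of your factual claims are wrong and would break the case analysis as written. First, $\PSL_2(r)$ on $r+1$ points is $2$-transitive, so line~5 belongs in the $2$-transitive case, not the $2$-homogeneous-not-$2$-transitive case; the only genuinely $2$-homogeneous-not-$2$-transitive line is line~1, and there you have the congruence backwards: by Kantor one has $r\equiv 3\pmod 4$, and the Mortimer argument eliminates $r\equiv 3\pmod 8$, leaving $r\equiv 7\pmod 8$. Second, lines~3, 4, 6, 7 arise from \emph{almost-simple} $2$-transitive actions (projective space for $\PSL_t(2^k)$ and $\alt_7$, quadrics for $\Sp_{2t}(2)$), not from affine ones. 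Third, the self-orthogonal clause of Lemma~\ref{mindesigncodedist} does \emph{not} give the bounds in lines~8 and~14: for line~8 the paper applies the full inequality $m-1\leq(\delta-1)(\delta^\perp-1)$ to the unital design module $D$ (whose blocks have size $r+1$), yielding $\delta(D^\perp)\geq r^2+1$; self-orthogonality would give only $r^{3/2}+1$, and in any case fails for $r\equiv 1\pmod 4$ by Remark~\ref{psuremark}. For line~14 the self-orthogonal bound is under $15$; the bound $\delta\geq 50$ is taken directly from Calderbank--Wales~\cite{CALDERBANK1982233}. Finally, the bound in line~3 comes from the known minimum distance of subfield subcodes of projective Reed--Muller codes \cite[Section~5.7]{assmus1994designs}, not from generic design counting.
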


Note that perfect linear codes over finite fields are classified (see \cite{tietavainen1973nonexistence}), and a code with minimum distance $100$ and length $276$ invariant under $X_{\b 0}^M\cong \Co_3$ is given in \cite{Haemers1993}. Also, the minimum distance $\delta$ of the codes corresponding to line 9 of Table~\ref{binarytable}, where $\soc(X_{\b 0})=\PSU_3(r)$ with $r\equiv 1\pmod{4}$, has not been shown here to satisfy $\delta\geq 5$, though these codes are $2$-neighbour-transitive with $\delta\geq 4$, by Lemma~\ref{moduleiscode}; see Remark~\ref{psuremark}.

\begin{proof}
 By Lemma~\ref{binarycodepermsubmodules}, $X_{\b 0}$ is $2$-homogeneous on $M$ and ${\mathcal C}$ is a submodule of the vertex set $V\cong\F_2^m$ of $H(m,2)$, regarded as the $\F_2$-permutation module for the action of $X_{\b 0}$ on $M$. Every $2$-homogeneous permutation module has (at least) two proper submodules, given by the repetition code $Y$ and its dual $Y^\perp$, under the standard inner product. Since $Y^\perp$ has minimum distance $2$, and ${\mathcal C}\neq Y$ by assumption, we require that the \emph{heart} of $V$, defined to be $Y^\perp/(Y\cap Y^\perp)$, is reducible. 
 
 Lemma~\ref{moduleiscode} implies that any submodule of $V$ of dimension at least $1$, other than $V$,$Y$ and $Y^\perp$, gives a code $C$ such that $C$ is either perfect with $\delta=3$, or $C$ is linear-$(X,2)$-neighbour-transitive with $\delta\geq 4$. As perfect linear codes over $\F_2$ have been classified (see \cite{tietavainen1973nonexistence}, for instance), differentiating these cases is possible. In fact, if $\delta=3$, $m\geq 5$ and $C$ is perfect, then $C$ is a Hamming code of length $2^t-1$, where $t\geq 3$. 
 
 First, suppose $X_{\b 0}$ acts $2$-transitively on $M$. The main result of \cite{mortimer1980modular} then implies that $X_{\b 0}$ and $m$ are as in Table~\ref{binarytable}, so that we can restrict the discussion to those groups listed. Further details taken from \cite{mortimer1980modular} will be pointed out as they are used. The remaining information in Table~\ref{binarytable} comes from explicit examples, with the help of \cite{Ivanov1993}, and Lemma~\ref{mindesigncodedist} for some of the bounds on $\delta$. A \emph{preminimal} submodule of $V$ is defined to be a submodule $U$ containing $Y$ such that $U/Y$ is a minimal submodule of $V/Y$. Hence, we have that ${\mathcal C}$ contains a module $U$ that is either minimal or preminimal. If $\delta_U$ is the minimum distance of $U$, then $\delta\leq \delta_U$ and $\delta\geq 5$ implies $\delta_U\geq 5$. In the following, let $C=U$. In \cite[Section~3]{Ivanov1993} the faithful minimal and preminimal $X_{\b 0}$-submodules of the permutation module $V$ for the $2$-transitive group $X_{\b 0}$ are classified. 
 
 Let $X_{\b 0}$ be a $2$-transitive subgroup of $\AGL_t(2)$ and $m=2^t$, where $t\geq 3$, since $m\geq\delta\geq 5$. By \cite[Theorem~4.1]{Ivanov1993}, there is no minimal submodule and there is a unique preminimal submodule. It is spanned by the constant and linear functions, giving $\delta\leq 2^{t-1}$. Indeed this preminimal submodule is the Reed-Muller code $\RM_2(1,t)$, by \cite[Theorem~5.3.3]{assmus1994designs}, with minimum distance equal to $2^{t-1}$ and dimension $t+1$. Moreover, $\delta\geq 5$ is satisfied when $t\geq 4$, while if $t\leq 3$ then $\delta=2^{t-1}<5$, as in line 2 of Table~\ref{binarytable}. 
 
 Let $\soc(X_{\b 0})\cong \PSL_t(2^k)$ and $m=(2^{kt}-1)/(2^k-1)$, or $X_{\b 0}\cong \alt_7\leq\PSL_4(2)$ and $m=15$. Note that $t\geq 3$, by \cite{mortimer1980modular}. By \cite[Theorems~5.1 and~5.2]{Ivanov1993}, both a unique preminimal submodule, of dimension $m^k+1$, and a unique minimal submodule, of dimension $m^k$, exist, and are generated by the characteristic functions of all hyperplanes, and the characteristic functions of all complements of hyperplanes, respectively. The subfield subcode of the projective Reed-Muller code $\PRM_{2/2^k}(2^k-1,t)$ has minimum distance $(2^{k(t-1)}-1)/(2^k-1)$, by \cite[Proposition~5.7.1]{assmus1994designs}, and is generated by the characteristic functions of all hyperplanes, by \cite[Theorem~5.7.9]{assmus1994designs}. The code generated by the characteristic vectors of all complements of hyperplanes is the even weight subcode of $\PRM_{2/2^k}(2^k-1,t)$ which has minimum distance at least $(2^{k(t-1)}-1)/(2^k-1)+1$, by \cite[Theorem~5.7.9]{assmus1994designs}. If $(k,t)=(1,3)$ then the characteristic vector of the complement of a hyperplane has weight $4$. Thus, $\delta\geq 5$ requires $(k,t)\neq (1,3)$. So line 3 or 4 of Table~\ref{binarytable} holds.
 
 Let $\soc(X_{\b 0})\cong\PSL_2(r)$, where, by \cite{mortimer1980modular}, $X_{\b 0}$ is not $3$-transitive, $r\equiv \pm 1\pmod 8$ and $m=r+1$. By \cite[Lemma~5.4]{Ivanov1993}, there are no minimal submodules and exactly two preminimal submodules, both having dimension $(r+1)/2$ and producing codes with minimum distance at most $(r+1)/2$. These codes are the extended quadratic residue codes by \cite[Theorem~2.10.2 and Corollary~2.10.1]{assmus1994designs}. Perfect codes must have odd length, by \cite{tietavainen1973nonexistence}, which implies that $\delta\geq 4$, but to satisfy $\delta\geq 5$ requires $r\geq 9$. By \cite[Theorem~2.10.1 and Corollary~2.10.2]{assmus1994designs}, the minimum distance of these extended quadratic residue codes satisfies $(\delta-1)^2\geq r$, so that, except for $r=9$ (since $15$ is not a prime power), we have that $\delta\geq 5$ holds. Suppose $r=9$ and $\delta\geq 5$. Then, by \cite[Table~1]{Best78boundsfor}, $|C|\leq 12$. However, by \cite[(F) Page~13]{mortimer1980modular}, $C$ has dimension at least $4$, and thus $|C|\geq 2^4$, giving a contradiction. Thus $\delta\geq 5$ occurs only when $r\geq 23$, and line 5 of Table~\ref{binarytable} holds. Note that when $r=23$ we have that $C$ is the extended binary Golay code (see \cite[Exercise~2.10.1 (4)]{assmus1994designs}).
 
 Let $X_{\b 0}\cong\Sp_{2t}(2)$, $t\geq 2$ and $m=2^{2t-1}-2^{t-1}$ or $2^{2t-1}+2^{t-1}$. By \cite[Theorem~6.2]{Ivanov1993}, there are no minimal submodules; if $t=2$ there are two preminimal submodules, each having dimension $2t+1$, and if $t\geq 3$ then there is a unique preminimal submodule, of dimension $2t+1$. From \cite[Lemma~6.1]{Ivanov1993} we have that $\delta$ is $2^{2t-2}$ and $2^{2t-2}-2^{t-1}$ when $m=2^{2t-1}-2^{t-1}$ and $2^{2t-1}+2^{t-1}$, respectively. Thus, $\delta\geq 5$ requires $t\geq 3$, and line 6 or 7 of Table~\ref{binarytable} holds.
 
 Let $\soc(X_{\b 0})\cong \PSU_3(r)$ and $m=r^3+1$. Then, by \cite{mortimer1980modular}, $r$ is odd. Let $D$ be the design submodule of the $2$-$(r^3+1,r+1,1)$ design invariant under $X_{\b 0}$. If $r\equiv 1\pmod{4}$ then, by \cite[Theorem~7.2]{Ivanov1993}, there are no minimal submodules and two preminimal submodules, namely $D$ and $D^\perp$, of dimensions $r^2-r+1$ and $r^3-r^2+r$, respectively. Since $m$ is even, and hence $D$ is not perfect, $\delta\geq 4$ is satisfied for $D$ and $D^\perp$, by Lemma~\ref{moduleiscode}. Let $r\equiv 3\pmod{4}$. Then, by \cite[Theorem~7.3]{Ivanov1993} and \cite[Theorem~4.1]{HISS2004223}, $D^\perp$ is the unique preminimal submodule of dimension $r^2-r+1$. By Lemma~\ref{mindesigncodedist}, and since $D$ has minimum distance at most $r+1$, it follows that $D^\perp$ has minimum distance at least $r^2+1$, as in lines 8 or 9 of Table~\ref{binarytable}.
  
 Let $\soc(X_{\b 0})\cong\Ree(r)$ and $m=r^3+1$. By \cite[Theorem~7.4]{Ivanov1993}, a unique preminimal submodule of dimension $r^2-r+1$ exists, so that $C\subseteq C^\perp$. Thus, by Lemma~\ref{mindesigncodedist}, $\delta\geq r^{3/2}+1$, and line 10 of Table~\ref{binarytable} holds.
 
 Let $X_{\b 0}\cong \mg_{m}$, where $m=22,23$ or $24$. For $m=24$, the Golay code $\G_{24}$ has minimum distance $8$ and dimension $12$. For $m=23$ the Golay code $\G_{23}$ has minimum distance $7$ and dimension $12$, whilst the even weight subcode of $\G_{23}$ has minimum distance $8$ and dimension $11$. Since $\G_{23}$ has covering radius $3$, puncturing $\G_{23}$ results in a code of length $22$ that is invariant under $\mg_{22}$ with minimum distance $6$. The dual of this code has dimension $10$ and minimum distance $8$, confirmed by calculation in \cite{GAP4}. By the discussion in \cite[Section~8]{Ivanov1993} these are all the possibilities for $C$, so lines 11--13 of Table~\ref{binarytable} hold.
 
 Let $X_{\b 0}=\HS$. Then \cite[Section~8]{Ivanov1993} states that there exists a unique preminimal submodule, which is a codimension $1$ submodule of a module $D$ generated by a $2$-$(176,50,14)$ design. By \cite{CALDERBANK1982233}, $D$ has minimum distance $50$ and dimension $22$. Hence $C$ has minimum distance at least $50$ and dimension $21$ and Table~\ref{binarytable}, line 14 holds. Let $X_{\b 0}\cong \Co_3$. Then \cite[Section~8]{Ivanov1993} states that there exists a unique preminimal submodule of dimension $23$. A code $C$ with length $276$, dimension $23$ and $\delta=100$ is constructed in \cite{Haemers1993}, so Table~\ref{binarytable}, line 15 holds.
  
 This completes the examination of all possibilities for $X_{\b 0}$ $2$-transitive on $M$. Suppose finally that $X_{\b 0}$ is a $2$-homogeneous, but not $2$-transitive, subgroup of $\AGaL_1(r)$ where $m=r$ is a prime power, so that $r\equiv 3 \pmod{4}$, by \cite{Kantor1969}. Let $r\equiv 7\pmod{8}$. Then, by \cite[Lemma~2.10.1 and Theorem~2.10.2]{assmus1994designs}, quadratic residue codes provide examples of dimension $(r-1)/2$. By \cite[Corollary~2.10.1]{assmus1994designs}, these quadratic residue codes satisfy $C=C^\perp$, so that, by Lemma~\ref{mindesigncodedist}, we have $\delta\geq\sqrt{r-1}+1$. Thus $\delta\geq 5$ for $r\geq 23$. Since $15$ is not a prime power, only the possibility that $r=7$ remains. If $r=7$, then the quadratic residue codes are the perfect Hamming code and its dual with minimum distances $3$ and $4$, and dimensions $4$ and $3$, respectively. Note also that when $r=23$ then $C$ is the binary Golay code, by \cite[Exercise~2.10.1 (4)]{assmus1994designs}). Comparing dimensions tells us that these are all the possibilities for $C$ here. The perfect Hamming code does not arise for larger $r$, since, for $t\geq 4$, $\PSL_t(2)$ does not have a subgroup that acts $2$-homogeneously, but not $2$-transitively, on $2^t-1$ points. For $r\equiv 3\pmod{8}$, the argument in \cite{mortimer1980modular} for $\PSL_2(r)\leq G\leq \PSiL_2(r)$ gives a contradiction. Thus, line 1 of Table~\ref{binarytable} holds, completing the proof.
\end{proof}

Theorem~\ref{binaryx2ntchar} may now be proved.

\begin{proof}[Proof of Theorem~\ref{binaryx2ntchar}]
 Let $Y=\Aut({\mathcal C})$. Suppose ${\mathcal C}$ is a $(Y,2)$-neighbour-transitive code in $H(m,2)$ with minimum distance at least $5$. It follows from Proposition~\ref{x12trans} that $Y_i^{Q_i}\cong \s_2$ and from Proposition~\ref{ihom} that $Y$ acts transitively on $M$. Thus, either $Y\cap B$ is trivial, or ${\mathcal C}$ is $Y$-alphabet-affine. If ${\mathcal C}$ is $Y$-alphabet-affine then, by Proposition~\ref{orbitof0is2ntmodule}, ${\mathcal C}$ is a $(Y,2)$-neighbour-transitive extension of an $\F_2 Y_{\b 0}$-submodule $W$ of the vertex set of $H(m,2)$, viewed as the permutation module $\F_2^m$ for $Y_{\b 0}$. If $Y\cap B$ is trivial then ${\mathcal C}$ is as in \cite[Theorem~1.1]{ef2nt}, and if $W=\Rep(m,2)$ then ${\mathcal C}$ is as in Theorem~\ref{onedimensionaltheorem}, which combined give the first and second cases of the result. If $W$ is not the repetition code then the minimum distance of $W$ is less than $m$, by \cite[Lemma~2.15]{ef2nt}. Thus, by Theorem~\ref{binarygroups}, $W$, and hence ${\mathcal C}$, contains a code $C$ with parameters as in Table~\ref{binarytable}, with $X_{\b 0}=Y_{\b 0}$. Since $Y$ acts transitively on $\C$ and, by Part 1 of Proposition~\ref{orbitof0is2ntmodule}, $T_C\leq Y$, the third part of the result holds. 
 
 Conversely, if ${\mathcal C}$ is a code in $H(m,2)$ as in the first or second part of the result, then ${\mathcal C}$ is $2$-neighbour-transitive by \cite[Theorem~1.1]{ef2nt} or Theorem~\ref{onedimensionaltheorem}. Suppose that ${\mathcal C}$ is a code in $H(m,2)$ containing a linear subcode $C$ satisfying the conditions of Part 3 of Theorem~\ref{binaryx2ntchar}. By Theorem~\ref{binarygroups}, $C$ is $2$-neighbour-transitive by, so that, by Proposition~\ref{ihom} and since $\delta\geq 5$, we have that $X_{\b 0}$ acts transitively on $\varGamma_i({\b 0})$, for $i=1,2$. Since $\C$ is a union of a set $\S$ of cosets of $C$, $\Aut(\C)$ acts transitively on $\S$, and $T_C\leq\Aut(\C)$ it follows that $\Aut(\C)$ acts transitively on $\C$. Hence $\C$ is $2$-neighbour-transitive.
\end{proof}

\end{document}